\documentclass[11pt,a4paper,twoside]{article}

\usepackage[utf8]{inputenc}
\usepackage[margin=2cm]{geometry}
\usepackage[shortlabels]{enumitem}
\usepackage[page,toc]{appendix}
\usepackage{amsmath,amssymb,graphicx,xcolor,subcaption,float,amsthm,mathrsfs,mathtools,bbold,framed,url,amscd,soul}
\setlength{\marginparwidth}{2cm}
\usepackage[color=yellow]{todonotes}

\setstcolor{red}

\usepackage{hyperref}
\hypersetup{
colorlinks=true,
}

\newcommand{\nocontentsline}[3]{}
\newcommand{\tocless}[2]{\bgroup\let\addcontentsline=\nocontentsline#1{#2}\egroup}

\theoremstyle{plain}
\newtheorem{theorem}{Theorem}[section]

\newtheorem{lemma}{Lemma}[section]
\newtheorem{proposition}{Proposition}[section]

    \newtheoremstyle{TheoremNum}
        {\topsep}{\topsep}              
        {\itshape}                      
        {}                              
        {\bfseries}                     
        {.}                             
        { }                             
        {\thmname{#1}\thmnote{ \bfseries #3}}
    \theoremstyle{TheoremNum}

    \newtheorem{repeat_lemma}{Lemma}
    \newtheorem{repeat_proposition}{Proposition}

\theoremstyle{definition}
\newtheorem{definition}{Definition}[section]
\theoremstyle{remark}
\newtheorem{remark}{Remark}[section]

\usepackage{fancyhdr}
\pagestyle{fancy}
\headheight=15pt
\fancyhead{}
\fancyfoot{}

\fancyhead[LO,RE]{\MakeUppercase \rightmark}
\fancyhead[LE,RO]{\textbf{\thepage}}

\tolerance=1
\emergencystretch=\maxdimen
\hyphenpenalty=10000
\hbadness=10000

\begin{document}
	\title{On the analytical aspects of inertial particle motion}
	\date{\today}
	\author{Oliver D. Street \thanks{Department of Mathematics, Imperial College London.} \and Dan Crisan \footnotemark[1]}
	\maketitle
	\begin{abstract}
	    In their seminal 1983 paper, M. Maxey and J. Riley introduced an equation for the motion of a sphere through a fluid. Since this equation features the Basset history integral, the popularity of this equation has broadened the use of a certain form of fractional differential equation to study inertial particle motion. 
	    In this paper, we give a comprehensive theoretical analysis of the Maxey-Riley equation. In particular, we build on previous local in time existence and uniqueness results to prove that solutions of the Maxey-Riley equation are global in time. In doing so, we also prove that the notion of a maximal solution extends to this equation. We furthermore prove conditions under which solutions are differentiable at the initial time. By considering the derivative of the solution with respect to the initial conditions, we perform a sensitivity analysis and demonstrate that two inertial trajectories can not meet, as well as provide a control on the growth of the distance between a pair of inertial particles. The properties we prove here for the Maxey-Riley equations are also possessed, mutatis mutandis, by a broader class of fractional differential equations of a similar form.
	    
	\end{abstract}

\tableofcontents

\section{Introduction}

Often, when modelling the movement of inertial particles in the ocean, the assumption is made that the object's mass does not influence its trajectory with no thorough justification of whether this is a good assumption or not. A detailed summary of current methods in this field can be found in \cite{VanSebille2018}. Purely data driven approaches without physical modelling can provide new insight into environmental issues connected to this topic, including ocean plastics \cite{Sebille2012}. Given the enormous environmental impact of small plastic pollutants (see e.g. \cite{Sebille2015}), a better mathematical understanding of models related to this phenomenon are crucial.  

In the development of an equation which accounts for the effects of mass, the Maxey-Riley equation \cite{Maxey1983} has dominated literature since its introduction in 1983 \cite{Michaelides1997}. Following on from a number of historical results dating back as far as the 19th century \cite{Basset1888,Boussinesq1885,Faxen1922,Oseen1927,Stokes1851,Tchen1947}, M. Maxey and J. Riley conglomerated these results into a useable equation which has proven very appealing to researchers as an `off-the-shelf' solution. Unfortunately, the Maxey-Riley equation is difficult to implement in practice. Specifically, the equation features temporal memory and thus presents data storage issues when numerically implemented. Furthermore, the equation is nonlinear and features a nonlocal integral term. These difficulties have been avoided in some studies by ignoring the Basset history term \cite{Michaelides1997}, or the so-called Fax\'en corrections \cite{Haller2008}. These simplifications look to be questionable in light of a growing body of evidence in support of the role of the Basset history term \cite{Daitche2015,Daitche2014,Guseva2016}. It's worth noting that making crude simplifications to the Maxey-Riley equation may appear reasonable on the surface, however throwing away any term in the Maxey-Riley equation is equivalent to ignoring the corresponding result from the historic literature. Nonetheless, authors have attempted to use a simplified version of Maxey-Riley for ocean transport applications \cite{Beron-Vera2015}. Attempts have been made to circumvent the need for the Basset history term by using a simplified Maxey-Riley equation and including a stochastic noise to match the equation to experimental data \cite{Sapsis2011}. A recent paper \cite{Prasath2019} has given hope to the idea of numerically solving the `full' Maxey-Riley equation with memory, and attempts have been made to use the Maxey-Riley equation as inspiration to create a framework more tailored to oceanography applications \cite{Beron-Vera2020a,Beron-Vera2019,Olascoaga2020}. As a separate issue, the suitability of the Maxey-Riley equation for particles in the ocean remains contentious due to the assumption on the size of the Reynolds number in the seminal paper \cite{Maxey1983}. 

From an analytical perspective, little is known about the Maxey-Riley equation or other fractional differential equations of this type. In \cite{Farazmand2015} the local existence and uniqueness of weak solutions of the Maxey-Riley equation is shown. Also, if the solution is differentiable at its initial time, the equations of motion can be re-written into a form which does permit strong solutions. In this paper, we present a comprehensive analysis on the Maxey-Riley equation.

We do so by considering the Maxey-Riley model as a fractional differential equation, and use methodology from fractional calculus to address complications caused by the Basset history term. This follows from the observation that the Basset history term takes the form of a fractional derivative of Riemann-Liouville type \cite{Tatom1988}.

In this work we show that many of the classical properties of ordinary differential equations (ODEs) also apply to the Maxey-Riley equation, and in doing so we provide detailed analytical properties for the Maxey-Riley equation. Since the complications in achieving this stem almost entirely from the Basset history term, these properties will extend to a class of equations featuring this term. We will address the issue of global existence and uniqueness of a weak solution, as well as introduce precise conditions under which a unique global strong solution exists. Furthermore, we will perform a sensitivity analysis with respect to the initial condition, thus establishing a restriction on how far `nearby particles' can move apart, as well as prove that two inertial particles with different initial conditions can not collide within the Maxey-Riley model. A better understanding of models featuring memory with this structure will assist in numerical studies by highlighting certain solution properties and thus enable researchers to understand what properties of ODEs are preserved in spite of the effect of memory on the solution. 

\paragraph{Contribution of the paper.}
\begin{itemize}
    \item In section \ref{GLOBALWELLPOSEDNESSSECTION}, we introduce assumptions (weaker than those assumed in \cite{Farazmand2015}) under which the Maxey-Riley equation has local weak solutions. Next, we prove the main result of the section; that solutions of Maxey-Riley are global in time, building on previous local existence and uniqueness results \cite{Farazmand2015}. In doing this, we also prove that the classical notion of a `maximal solution' extends to the Maxey-Riley equation and, in general, to fractional order equations. A Gr\"onwall inequality for fractional differential equations \cite{Lin2013a} (see Appendix \ref{GronwallAppendix}) is used to provide the necessary control on the solutions.
    \item In section \ref{WEAKTOSTRONGSOLUTIONS}, we cover the regularity of the solutions of the Maxey-Riley equation. Much of the difficulty here lies in the behaviour of the fractional order Basset history term at its lower limit $t_0$. Conditions under which solutions are differentiable are introduced.
    \item In section \ref{InitialConditionsSection}, we examine the equation which governs the evolution of the derivative of the solution to the Maxey-Riley equation with respect to the initial conditions. This enables us to perform a sensitivity analysis. A bound on this derivative gives a control on the distance between a pair of inertial particles.
    
    We derive the equation governing the matrix inverse of the derivative with respect to the inital conditions, and we show that this derivative is also bounded. This boundedness proves that two inertial trajectories, with distinct initial conditions, governed by the Maxey-Riley equation can not meet. 
\end{itemize}

\section{Preliminaries and existing analytical results}

\subsection{Framework}

\paragraph{The equations of motion.} The results we prove here extend those in Farazmand and Haller \cite{Farazmand2015}. To ensure clarity we will use much of the same notation as in \cite{Farazmand2015}.

For a fluid moving in a domain $\mathscr{D}\subseteq\mathbb{R}^n$ with velocity field $u:\mathscr{D}\times[0,\infty)\to \mathbb{R}^n$, we denote the trajectory of a inertial particle with mass released at time $t_0$ by $y:[t_0,\infty)\to\mathbb{R}^n$, and its velocity by $v:[t_0,\infty)\to\mathbb{R}^n$.  We nondimensionalise our problem by length scale $L$, time scale $T$, and velocity $U$ which are characteristic to the ambient flow $u$. For this flow, we have a Reynolds number (Re) and, for a particle of radius $a$, the problem corresponds to a Stokes number (St) where these quantities are defined via the kinematic viscosity $\nu$ by
\begin{equation}
    \rm{Re} = \frac{UL}{\nu},\quad \rm{St} = \frac{2}{9}\frac{a^2}{\nu T}.
\end{equation}
In a frame of reference moving with the particle, the Maxey-Riley equation may be written in the following form
\begin{equation}\label{MR}
    \begin{aligned}
    \dot{y} &= v \\
    \dot{v} &= R\frac{Du}{Dt}+\left(1-\frac{3R}{2}\right)g + \frac{R}{2}\frac{D}{Dt}\left(u + \frac{\gamma}{10}\mu^{-1}\Delta u \right) \\
    &\quad - \mu\left(v-u-\frac{\gamma}{6}\mu^{-1}\Delta u\right) - \kappa\mu^{1/2}\frac{d}{dt}\int_{t_0}^t\frac{w(s)}{\sqrt{t-s}}\,ds,
    \end{aligned}
\end{equation}
where
\begin{equation}
    w(t)=\dot{y}(t) - u(y(t),t) - \frac{\gamma}{6}\mu^{-1}\Delta u(y(t),t),
\end{equation}
and the additional parameters are defined by
\begin{equation}
    R=\frac{2\rho_f}{\rho_f+2\rho_p},\quad \mu = \frac{R}{\rm{St}},\quad \kappa = \sqrt{\frac{9R}{2\pi}},\quad \gamma = \frac{9R}{2\rm{Re}}.
\end{equation}
As in \cite{Farazmand2015}, we write equation \eqref{MR} in the following form
\begin{equation}\label{MR_ODEs}
    \begin{aligned}
        \dot{y} &= w + A_u(y,t), \\
        \dot{w} &= -\mu w - M_u(y,t)w-\kappa\mu^{1/2}\frac{d}{dt}\int_{t_0}^t\frac{w(s)}{\sqrt{t-s}}\,ds + B_u(y,t),
    \end{aligned}
\end{equation}
where $A_u,B_u:\mathscr{D}\times[t_0,\infty)\to\mathbb{R}^n$ and $M_u:\mathscr{D}\times[t_0,\infty)\to\mathbb{R}^{n\times n}$ are defined by
\begin{equation}
    \begin{aligned}
        A_u &= u + \frac{\gamma}{6}\mu^{-1}\Delta u, \\
        B_u &= \left(\frac{3R}{2}-1\right)\left(\frac{Du}{Dt}-g\right) + \left(\frac{R}{20}-\frac{1}{6}\right)\gamma\mu^{-1}\frac{D}{Dt}\Delta u \\
        &\quad -\frac{\gamma}{6}\mu^{-1}\left(\nabla u + \frac{\gamma}{6}\mu^{-1}\nabla\Delta u\right)\Delta u, \\
        M_u &= \nabla u + \frac{\gamma}{6}\mu^{-1}\nabla\Delta u.
    \end{aligned}
\end{equation}
\paragraph{Well-posedness properties.} We consider the integrated version of \eqref{MR_ODEs}
\begin{equation}\label{MR_integrated_ODEs}
    \begin{aligned}
        y(t) &= y_0 + \int_{t_0}^t w(s) + A_u(y(s),s)\,ds, \\
        w(t) &= w_0 + \int_{t_0}^t \left( -\mu w(s) - M_u(y(s),s)w(s)-\kappa\mu^{1/2}\frac{w(s)}{\sqrt{t-s}} + B_u(y(s),s)\right)\,ds.
    \end{aligned}    
\end{equation}

\begin{remark}
    Equation \eqref{MR_integrated_ODEs} is not a standard ODE, since the integrand of the equation in $w$ has $t$ as an argument. Standard ODE theory can not be applied and we need to develop all notions and results in the new context.
\end{remark}

\begin{definition}
    A solution of \eqref{MR_ODEs} is called \emph{weak} if it satisfies the integrated formulation \eqref{MR_integrated_ODEs}. A solution of \eqref{MR_ODEs} is called \emph{strong} if it satisfies \eqref{MR_integrated_ODEs} and also it is differentiable in time.
\end{definition}

\begin{remark}\label{FarazmandTheorem} 
    Farazmand and Haller \cite{Farazmand2015} prove that \eqref{MR_ODEs} has a weak solution under the following constraints: that $u(x,t)$ is three times continuously differentiable in both $x$ and $t$ and that all of its partial derivatives are uniformly bounded and Lipschitz continuous up to order three. Under these assumptions, for any initial condition $(y_0,w_0)\in\mathscr{D}\times\mathbb{R}^n$, there exists some $T>t_0$ such that over the time interval $[t_0,T)$ the integral equation \eqref{MR_integrated_ODEs} has a unique solution $(y(t),w(t))$ with $(y(t_0),w(t_0)) = (y_0,w_0)$.
    Within the proof of this result, it is also shown that $y,w$ have continuous paths on the interval $[t_0,t_0+T]$, and are hence bounded.
    
    Moreover, in \cite{Farazmand2015} this result is proven for a closed time interval. We state here the results with the half open interval $[t_0,T)$. If we were to include the endpoint $T$ we would have complications defining the derivative at $T$ due to the inability to define the limit from above. Furthermore, the inclusion of the half open interval will allow us to introduce the notion of a maximal solution.

    Here we will relax these conditions (see \eqref{assumptions} below), before extending this result to be global in time.
\end{remark}

\begin{remark}
    One can not determine whether a solution exists for the system of equations \eqref{MR_ODEs} from any general theorems known to the authors from the literature of both ordinary and fractional order differential equations (see e.g. \cite{Podlubny1999}). This is due to the specific nature of the nonlinearity of the system.
\end{remark}

\subsection{Maxey-Riley equation as a fractional differential equation}\label{FractionalSection}

In the context of the Basset-Boussinesq-Oseen equation, in the 1980s it was observed that the Basset history integral is in fact a Riemann-Liouville type  fractional derivative \cite{Tatom1988}. The same remark has been made in the context of the Maxey-Riley equation by a number of authors since \cite{Daitche2015,Daitche2014,Farazmand2015}. To illustrate this, we recall a definition of fractional derivatives \cite{Podlubny1999}.
\begin{definition}\label{RiemannLiouvilleDefinition}
	For a real number $p\in\mathbb{R}$, define the integer $n\in\mathbb{Z}$ to be such that $n-1\leq p < n$. We may then define the \emph{left Riemann-Liouville fractional derivative of order $p$} by
	\begin{equation}\label{RiemannLiouvilleDefinitionEquation}
		_aD^pf(t) = \frac{1}{\Gamma(n-p)}\frac{d^n}{dt^n}\int_a^t(t-s)^{n-p-1}f(s)\,ds.
	\end{equation}
\end{definition}
By comparing \eqref{RiemannLiouvilleDefinitionEquation} with \eqref{MR}, we can immediately see that the history integral is a Riemann-Liouville fractional derivative of order $1/2$
\begin{equation}
	_{t_0}D^{1/2}w(t) = \frac{1}{\sqrt{\pi}}\frac{d}{dt}\int_{t_0}^t\frac{w(s)}{\sqrt{t-s}}\,ds.
\end{equation}
Following \cite{Farazmand2015}, we write the Maxey-Riley equation as a system of nonlinear fractional differential equations 
\begin{equation}\label{MR_FDEs}
    \begin{aligned}
        _{t_0}D^1y(t) &= w + A_u(y,t), \\
        _{t_0}D^1w(t) &= -\mu w - M_u(y,t)w-\kappa\mu^{1/2}\sqrt{\pi}_{t_0}D^{1/2}w(t) + B_u(y,t).
    \end{aligned}
\end{equation}
Whilst this observation has been made as a remark in previous works, its consequences have not been fully exploited until now.

In the 1960s, Caputo \cite{Caputo1967} developed an approach to fractional differential equations which allows for initial value problems to be formulated to involve only the values of integer derivatives of the variables at $t_0$ (see e.g. \cite{Podlubny1999}). This means that, in general, initial value problems for Caputo-type fractional differential equations feature physically interpretable initial conditions. We give below the definition of a Caputo fractional derivative.
\begin{definition}\label{CaputoDefinition}
	For a non-integer real number $p\in\mathbb{R}\setminus\mathbb{Z}$, define the integer $n\in\mathbb{Z}$ to be such that $n-1<p<n$. We may then define the \emph{Caputo fractional derivative of order p} by
	\begin{equation}\label{CaputoDefinitionEquation}
		_a^CD^p_tf(t) = \frac{1}{\Gamma(n-p)}\int_a^t\frac{f^{(n)}(s)}{(t-s)^{p+1-n}}\,ds.
	\end{equation}
\end{definition}
It can be shown (see e.g. \cite{Podlubny1999}) that this derivative is a true interpolation between standard integer order derivatives, i.e. that
\begin{equation}
	\lim_{p\to n}{_a^CD^p_tf(t)} = f^{(n)}(t).
\end{equation}

The system of equations \eqref{MR_FDEs} have a weak solution under conditions \eqref{assumptions} (stated below). However, should the Maxey-Riley equation featuring the Riemann-Liouville fractional derivative have a strong solution, then it will coincide with the version of the Maxey-Riley equation where the Riemann-Liouville fractional derivative is replaced by a Caputo fractional derivative. Farazmand and Haller \cite{Farazmand2015}, show that the latter has a strong solution, nevertheless they do not show the existence of a strong solution of the Maxey-Riley equations featuring the Riemann-Liouville fractional derivative. In this paper, we close this gap and give a clean criterion for the existence of a strong solution. In particular, in Theorem \ref{thm: MR_strong_solutions} we show that the Maxey-Riley equation has a strong solution if and only if $w(t_0)=0$.

\section{Global existence and uniqueness of a weak solution}\label{GLOBALWELLPOSEDNESSSECTION}

    In this section, we extend the local in time existence and uniqueness result in \cite{Farazmand2015} to a global in time result. We do so by using variations of the standard arguments from the theory of ordinary differential equations \cite{Grigorian2007}, as well as a tailored Gr\"onwall lemma for fractional differential equations (see Appendix \ref{GronwallAppendix}). In the following we work with the assumptions:

    \begin{equation}
        \tag{\(*\)}\label{assumptions}
        \parbox{\dimexpr\linewidth-4em}{%
        \strut
        The velocity field, $u$, and its derivatives are sufficiently smooth to ensure that the first derivatives in time and space of $A_u$ and $B_u$ are continuous and uniformly bounded in time and space.%
        \strut}
    \end{equation}
    By uniformly bounded, we mean that its supremum norm is bounded by some constant $L_b$. Thus there exists some constant $L_b$ such that 
    \begin{equation*}
        \|\partial_tA_u\|_\infty,\|\nabla A_u\|_\infty,\|\partial_tB_u\|_\infty,\|\nabla B_u\|_\infty < L_b\,.
    \end{equation*}    
    Notice that the assumption \eqref{assumptions} is sufficient to deduce that $A_u$ and $B_u$ are Lipschitz in space uniformly in time, meaning that there exists some $L_c>0$ such that for any $t\in[t_0,t_0+T)$ and $y_1,y_2\in\mathscr{D}$, we have
  \begin{align*}
      |A_u(t,y_1) - A_u(t,y_2)| &\leq L_c |y_1-y_2| \,,\\
      |B_u(t,y_1) - B_u(t,y_2)| &\leq L_c |y_1-y_2| \,.
  \end{align*}
  Similarly, $A_u$ and $B_u$ are Lipschitz in space. The assumption \eqref{assumptions} is sufficient since the proofs of the lemmata in \cite{Farazmand2015} may be modified to prevent the use of the boundedness of $A_u$ and $B_u$ in the same way that those in Appendix \ref{AppendixFarazmandLemmas} have been modified to prevent this.

\begin{definition}
    A solution $(\tilde{y},\tilde{w})$ of \eqref{MR_integrated_ODEs} with domain $[t_0,\tilde T)$ is called an \emph{extension} of the solution $(y,w)$ with domain $[t_0,T)$ if $t_0<T < \tilde{T}$ and the solutions are identical on $[t_0,T)$. The solution $(y,w)$ is called \emph{maximal} if there exists no such extension.
\end{definition}

\begin{proposition}\label{UniquenessProposition}
    Suppose $(y_1,w_1)$ and $(y_2,w_2)$ are two solutions to \eqref{MR_integrated_ODEs} with domains $[t_0,T_1)$ and $[t_0,T_2)$ respectively, corresponding to the same initial condition $(y_0,w_0)$, then the two solutions coincide on $[t_0,\min\{T_1,T_2\})$.
\end{proposition}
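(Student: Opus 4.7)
The plan is to reduce the question to a Grönwall-type argument on the difference of the two solutions. Fix any $T^* < \min\{T_1,T_2\}$ and set $Y(t) = y_1(t)-y_2(t)$, $W(t) = w_1(t)-w_2(t)$ on $[t_0, T^*]$. Subtracting the two integrated formulations \eqref{MR_integrated_ODEs}, we obtain
\begin{equation*}
    \begin{aligned}
        Y(t) &= \int_{t_0}^t \bigl(W(s) + A_u(y_1(s),s) - A_u(y_2(s),s)\bigr)\,ds,\\
        W(t) &= \int_{t_0}^t\!\!\Bigl(-\mu W(s) - M_u(y_1,s)w_1(s) + M_u(y_2,s)w_2(s) \\
        &\qquad\qquad - \kappa\mu^{1/2}\tfrac{W(s)}{\sqrt{t-s}} + B_u(y_1,s) - B_u(y_2,s)\Bigr)\,ds.
    \end{aligned}
\end{equation*}
The idea is now to bound $|Y|+|W|$ by a fractional integral of itself and invoke the fractional Grönwall lemma of Appendix \ref{GronwallAppendix}.

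By Remark \ref{FarazmandTheorem}, both $(y_i,w_i)$ have continuous paths and are thus bounded on $[t_0,T^*]$; in particular, there is a constant $C_w$ bounding $|w_i(s)|$ there. The assumption \eqref{assumptions} yields a Lipschitz constant $L_c$ for $A_u$ and $B_u$ in the spatial variable, and since $M_u = \nabla u + \tfrac{\gamma}{6}\mu^{-1}\nabla\Delta u$ is likewise controlled by the hypotheses on $u$, it is bounded and Lipschitz uniformly on $[t_0,T^*]$. Writing $M_u(y_1,s)w_1 - M_u(y_2,s)w_2 = M_u(y_1,s)W(s) + \bigl(M_u(y_1,s)-M_u(y_2,s)\bigr)w_2(s)$ and applying these bounds, we obtain constants $C_1, C_2$ depending on $T^*$ such that for all $t \in [t_0,T^*]$,
\begin{equation*}
    |Y(t)| + |W(t)| \;\leq\; C_1\int_{t_0}^t \bigl(|Y(s)|+|W(s)|\bigr)\,ds \;+\; C_2\int_{t_0}^t \frac{|Y(s)|+|W(s)|}{\sqrt{t-s}}\,ds.
\end{equation*}

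Since the first integral can itself be dominated by a (weaker) fractional integral plus a constant multiple of the second, this puts $|Y|+|W|$ in the framework of the fractional Grönwall inequality recalled in Appendix \ref{GronwallAppendix}. Applying it with zero forcing term forces $|Y(t)|+|W(t)| \equiv 0$ on $[t_0, T^*]$, i.e.\ $(y_1,w_1) = (y_2,w_2)$ on $[t_0,T^*]$. Because $T^* < \min\{T_1,T_2\}$ was arbitrary, the two solutions coincide on the whole interval $[t_0,\min\{T_1,T_2\})$.

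The main obstacle is the singular memory kernel $1/\sqrt{t-s}$, which rules out a direct application of the classical Grönwall lemma; handling it requires the fractional Grönwall inequality of Appendix \ref{GronwallAppendix}. A minor technical point is to ensure the difference estimate involves only integrals of $|Y|+|W|$ against kernels covered by that lemma, which is why one splits $M_u(y_1,s)w_1 - M_u(y_2,s)w_2$ as above and works on a fixed compact subinterval $[t_0,T^*]$ where local boundedness of $w_2$ is available.
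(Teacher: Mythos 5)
Your proof is correct and follows essentially the same route as the paper: subtract the two integrated equations, bound the difference using the Lipschitz/boundedness consequences of \eqref{assumptions}, and apply the fractional Gr\"onwall inequality of Appendix \ref{GronwallAppendix} with zero forcing term to conclude the difference vanishes. If anything, your explicit splitting of $M_u(y_1,s)w_1 - M_u(y_2,s)w_2$ and your restriction to compact subintervals $[t_0,T^*]$ make the estimate slightly more careful than the version in the paper's appendix.
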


\begin{proof}
See Appendix \ref{TechnicalProofsAppendix}.
\end{proof}

\begin{lemma}\label{UnionSolutionsLemma}
    Let $\{(y_\alpha(t),w_\alpha(t))\}_{\alpha\in A}$ be a family of solutions to \eqref{MR_integrated_ODEs} with initial condition $(y_0,w_0)$, where $A$ is an arbitrary index set. Let the domain of $(y_\alpha,w_\alpha)$ be $[t_0,T_\alpha)$. We define $T$ such that $[t_0,T) = \bigcup_{\alpha\in A}[t_0,T_\alpha)$, and then define a function on $[t_0,T)$ by
    \begin{equation}\label{LemmaSolution}
        (y(t),w(t)) =(y_\alpha(t),w_\alpha(t)),\quad \text{if }t\in [t_0,T_\alpha).
    \end{equation}
    Then $(y(t),w(t))$ is also a solution to \eqref{MR_integrated_ODEs} with the same initial condition.
\end{lemma}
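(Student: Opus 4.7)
The plan is to verify three things in sequence: that the function $(y(t),w(t))$ defined by \eqref{LemmaSolution} is unambiguously specified, that it satisfies the initial condition, and that for each $t\in[t_0,T)$ the integrated equation \eqref{MR_integrated_ODEs} is fulfilled. The key ingredient, used throughout, is Proposition \ref{UniquenessProposition}.

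First I would check well-definedness. Suppose $t\in[t_0,T_{\alpha_1})\cap[t_0,T_{\alpha_2})$ for two indices $\alpha_1,\alpha_2\in A$. Both $(y_{\alpha_1},w_{\alpha_1})$ and $(y_{\alpha_2},w_{\alpha_2})$ are solutions of \eqref{MR_integrated_ODEs} starting from the same initial condition $(y_0,w_0)$, so Proposition \ref{UniquenessProposition} yields $(y_{\alpha_1}(s),w_{\alpha_1}(s))=(y_{\alpha_2}(s),w_{\alpha_2}(s))$ for every $s\in[t_0,\min\{T_{\alpha_1},T_{\alpha_2}\})$, in particular at $s=t$. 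Hence the value of $(y(t),w(t))$ does not depend on the choice of representative $\alpha$, and the prescription \eqref{LemmaSolution} defines an unambiguous function on $[t_0,T)$. The initial condition is immediate, since $(y_\alpha(t_0),w_\alpha(t_0))=(y_0,w_0)$ for every $\alpha$.

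Next I would verify that $(y,w)$ satisfies the integrated equation. Fix $t\in[t_0,T)$; by construction of $T$ as a union, there exists $\alpha\in A$ with $t\in[t_0,T_\alpha)$, and then $[t_0,t]\subseteq[t_0,T_\alpha)$. By the well-definedness argument above, $(y(s),w(s))=(y_\alpha(s),w_\alpha(s))$ for every $s\in[t_0,t]$. Substituting into the integrated equation for the $\alpha$-solution gives
\begin{equation*}
    y(t)=y_\alpha(t)=y_0+\int_{t_0}^t \bigl(w_\alpha(s)+A_u(y_\alpha(s),s)\bigr)\,ds=y_0+\int_{t_0}^t \bigl(w(s)+A_u(y(s),s)\bigr)\,ds,
\end{equation*}
and an identical replacement in the equation for $w_\alpha$ yields
\begin{equation*}
    w(t)=w_0+\int_{t_0}^t\!\!\left(-\mu w(s)-M_u(y(s),s)w(s)-\kappa\mu^{1/2}\frac{w(s)}{\sqrt{t-s}}+B_u(y(s),s)\right)ds.
\end{equation*}
Since $t$ was arbitrary, $(y,w)$ solves \eqref{MR_integrated_ODEs} on $[t_0,T)$.

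The argument has no serious obstacles: it is purely a glueing of solutions in the spirit of classical ODE theory. The only non-cosmetic point is that Proposition \ref{UniquenessProposition} must be invoked both to guarantee well-definedness of the pointwise prescription and, implicitly, to ensure that on the integration interval $[t_0,t]$ we may freely substitute $(y,w)$ for $(y_\alpha,w_\alpha)$ under the integral sign. Because the memory term in \eqref{MR_integrated_ODEs} only integrates over $[t_0,t]$, no information about the solution beyond $T_\alpha$ is ever needed at time $t$, so no additional care is required to handle the Basset kernel.
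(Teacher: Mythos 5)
Your proposal is correct and follows essentially the same route as the paper's own proof: well-definedness via Proposition \ref{UniquenessProposition}, the initial condition from $t_0$ lying in every $[t_0,T_\alpha)$, and verification of the integrated equation at each $t$ by choosing an $\alpha$ whose interval contains $t$. Your extra remark that the memory integral only involves $[t_0,t]\subseteq[t_0,T_\alpha)$ makes explicit a point the paper leaves implicit, but it is the same argument.
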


\begin{proof}
    
    See Appendix \ref{TechnicalProofsAppendix}.
\end{proof}

\begin{proposition}\label{MaximalSolutionProposition}
    Assume that $u$ satisfies the conditions in \eqref{assumptions}, then we have a unique maximal solution to \eqref{MR_integrated_ODEs} with initial condition $(y_0,w_0)$.
\end{proposition}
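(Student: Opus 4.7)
The plan is to exhibit the maximal solution as the union of all solutions with the prescribed initial condition, then derive both maximality and uniqueness by contradiction using the two preceding results. First I would let $\mathcal{S}$ denote the family of all solutions of \eqref{MR_integrated_ODEs} with initial data $(y_0, w_0)$, indexed by their (half-open) domains $[t_0, T_\alpha)$. By the local existence result of Farazmand and Haller (Remark \ref{FarazmandTheorem}), with the hypotheses weakened to \eqref{assumptions} as noted in the paragraph following that remark, the family $\mathcal{S}$ is nonempty. Applying Lemma \ref{UnionSolutionsLemma} directly to $\mathcal{S}$ then produces a solution $(y, w)$ on the half-open interval $[t_0, T)$, where $T := \sup_\alpha T_\alpha \in (t_0, \infty]$ and the values of $(y, w)$ are well-defined thanks to Proposition \ref{UniquenessProposition}.

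Next I would verify that this $(y, w)$ is maximal. Suppose, for contradiction, that there were an extension $(\tilde y, \tilde w)$ on $[t_0, \tilde T)$ with $T < \tilde T$. Then $(\tilde y, \tilde w) \in \mathcal{S}$, so by the definition of $T$ we would have $\tilde T \leq T$, contradicting $T < \tilde T$. Hence no such extension exists and $(y, w)$ is maximal. For uniqueness, suppose $(y_1, w_1)$ and $(y_2, w_2)$ are two maximal solutions with domains $[t_0, T_1)$ and $[t_0, T_2)$. Proposition \ref{UniquenessProposition} guarantees they coincide on $[t_0, \min(T_1, T_2))$. Without loss of generality assume $T_1 \leq T_2$; if the inequality were strict, $(y_2, w_2)$ would be a proper extension of $(y_1, w_1)$, contradicting the maximality of the latter. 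Thus $T_1 = T_2$ and the two solutions agree throughout their common domain.

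There is no genuine obstacle at this stage: the technical work is already encapsulated in Lemma \ref{UnionSolutionsLemma} and Proposition \ref{UniquenessProposition}, and this proposition is essentially a formal consequence of them combined with local existence. The only point requiring mild care is bookkeeping around the convention that solution domains are half-open intervals (so that the union in Lemma \ref{UnionSolutionsLemma} is again of the form $[t_0, T)$) and that "extension" requires a strict inequality $T < \tilde T$; both are built into the definition preceding Proposition \ref{UniquenessProposition}, so no additional argument is needed.
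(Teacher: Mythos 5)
Your proposal is correct and follows essentially the same route as the paper: both construct the maximal solution as the union (via Lemma \ref{UnionSolutionsLemma}) of all solutions with the given initial condition, and both derive maximality from the fact that this domain contains every other solution's domain. The only cosmetic difference is in the uniqueness step, where the paper forms the union of the two candidate maximal solutions and invokes maximality of each, while you compare domain endpoints directly using Proposition \ref{UniquenessProposition}; these are interchangeable formal consequences of the same two preceding results.
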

\begin{proof}
    We need only prove that the solution identified in Lemma \ref{UnionSolutionsLemma} corresponding to the family of all possible solutions to \eqref{MR_integrated_ODEs} with initial condition $(y_0,w_0)$ is the unique maximal solution. We know that $(y,w)$ is indeed a solution of \eqref{MR_integrated_ODEs} and it is maximal since its domain contains the domains of all other possible solutions. It only remains to prove uniqueness.
    
    Let $(\tilde y, \tilde w)$ be another such maximal solution. Similar to Lemma \ref{UnionSolutionsLemma}, the union of our two maximal solutions is a solution of \eqref{MR_integrated_ODEs} with initial condition $(y_0,w_0)$ and extends $(y,w)$ and $(\tilde y, \tilde w)$. By the definition of maximality, this union must be identical to both $(y,w)$ and $(\tilde y,\tilde w)$ and hence we have uniqueness.
\end{proof}

\begin{theorem}\label{CompactSetTheorem}
Assume that $u$ satisfies the conditions \eqref{assumptions}, then if $(y(t),w(t))$ is a maximal solution with domain $[t_0,T)$ and $T$ is finite, then $(y,w)$ leaves any compact set $S\subset \mathscr{D}\times\mathbb{R}^{n}$ as $t$ approaches $T$.
\end{theorem}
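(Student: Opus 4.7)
The plan is to argue by contradiction: suppose that $(y,w)$ is a maximal solution with finite $T$ but fails to leave some compact $S\subset\mathscr{D}\times\mathbb{R}^n$, so there is a sequence $t_k\to T^-$ with $(y(t_k),w(t_k))\in S$. I will construct an extension of $(y,w)$ past $T$, contradicting the maximality in Proposition \ref{MaximalSolutionProposition}.

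First, I would confine $(y,w)$ to a compact subset of $\mathscr{D}\times\mathbb{R}^n$ for all $t\in[t_0,T)$. Using the Lipschitz bounds on $A_u,B_u$ supplied by \eqref{assumptions} and a standard Gr\"onwall estimate on the $y$-equation to express $|y(s)-y_0|$ in terms of $\int|w|$, the $w$-equation in \eqref{MR_integrated_ODEs} yields an integral inequality for $|w(t)|$ of the form
\begin{equation*}
|w(t)|\leq C_1 + C_2\int_{t_0}^t|w(s)|\,ds + C_3\int_{t_0}^t\frac{|w(s)|}{\sqrt{t-s}}\,ds,
\end{equation*}
to which the fractional Gr\"onwall inequality of Appendix \ref{GronwallAppendix} applies, giving a uniform bound on $|w(t)|$ over the finite interval $[t_0,T)$. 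Boundedness of $w$ then gives Lipschitz continuity of $y$ via $\dot y = w + A_u$, and together with $y(t_k)$ lying in the projection of $S$ (a compact subset of the open set $\mathscr{D}$), this forces $\lim_{t\to T^-}y(t)$ to exist in $\mathscr{D}$. A standard H\"older-$1/2$ bound on the Basset integral $\int_{t_0}^t w(s)/\sqrt{t-s}\,ds$ in $t$ (using $\sup|w|$) then makes $w$ uniformly continuous on $[t_0,T)$, so $(y(T),w(T)):=\lim_{t\to T^-}(y(t),w(t))$ exists in $\mathscr{D}\times\mathbb{R}^n$.

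Next I would split the Basset history term at $T$ for $t>T$,
\begin{equation*}
\int_{t_0}^t\frac{w(s)}{\sqrt{t-s}}\,ds = h(t) + \int_T^t\frac{w(s)}{\sqrt{t-s}}\,ds,\qquad h(t):=\int_{t_0}^T\frac{w(s)}{\sqrt{t-s}}\,ds,
\end{equation*}
where $h$ is a known, smooth, bounded function on $[T,T+\epsilon]$ built from the now-determined $w|_{[t_0,T]}$. Substituting into \eqref{MR_integrated_ODEs} recasts the problem of extending $(y,w)$ onto $[T,T+\epsilon)$ as a Maxey-Riley-type integral system with initial data $(y(T),w(T))$, lower limit $T$ on the Basset integral, and an additional prescribed continuous forcing $-\kappa\mu^{1/2}h(t)$. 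An adaptation of the Farazmand-Haller local existence argument \cite{Farazmand2015} (as modified under \eqref{assumptions} in Appendix \ref{AppendixFarazmandLemmas}) then produces a solution $(\tilde y,\tilde w)$ of the extension problem on $[T,T+\epsilon)$ for some $\epsilon>0$. Concatenating $(y,w)$ on $[t_0,T]$ with $(\tilde y,\tilde w)$ on $[T,T+\epsilon)$ yields a solution of \eqref{MR_integrated_ODEs} on $[t_0,T+\epsilon)$ strictly extending $(y,w)$, contradicting maximality.

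The main obstacle I anticipate is the third step — rerunning the Farazmand-Haller fixed-point argument with the new lower limit $T$ and the prescribed forcing $h$. Since $h$ is smooth on $[T,T+\epsilon]$ and the singular kernel structure is unchanged, $h$ should contribute only a harmless lower-order additive term to the contraction estimates; care is nevertheless needed to verify that the concatenation genuinely satisfies the original integral equation \eqref{MR_integrated_ODEs} with lower limit $t_0$, and not merely the modified equation with lower limit $T$. Given the ingredients in \cite{Farazmand2015} and Appendix \ref{AppendixFarazmandLemmas}, I expect this to be technical but routine.
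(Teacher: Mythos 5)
Your proposal is correct, but it takes a genuinely different route from the paper's. The paper never attempts to show that $(y,w)$ has a limit as $t\to T^-$: it picks a time $t_1<T$ at which the solution lies in $S$, restarts the fixed-point argument at $t_1$ while carrying the accumulated memory as the extra term $\kappa\mu^{1/2}\int_{t_0}^{t_1}\bigl(w(s)/\sqrt{t_1-s}-w(s)/\sqrt{t-s}\bigr)\,ds$, and exploits the key fact that the local existence time $\delta$ produced by Lemmas \ref{NewFarazmandLemma1} and \ref{NewFarazmandLemma2} depends only on the radius $R$ of a ball containing $S$ and not on $t_1$; choosing $t_1$ within $\delta$ of $T$ then pushes the solution past $T$. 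You instead first establish an a priori bound on $(y,w)$ over $[t_0,T)$ via the fractional Gr\"onwall inequality (this is essentially Lemma \ref{GronwallLemma}, which the paper proves separately and deploys only later, in Theorem \ref{GlobalWeakSolutionTheorem}), deduce uniform $1/2$-H\"older continuity of $w$ and hence existence of $(y(T),w(T))$, use the compactness of $S\subset\mathscr{D}\times\mathbb{R}^n$ only to place $y(T)$ in the interior of $\mathscr{D}$, and restart exactly at $T$ with the history $h(t)=\int_{t_0}^{T}w(s)/\sqrt{t-s}\,ds$ as a prescribed forcing. Both arguments work: yours buys a cleaner restart (a single application of the local theorem, with no uniformity-in-$t_1$ argument), at the price of importing the Gr\"onwall machinery into this theorem; the paper's stays closer to the classical ``escape from compact sets'' lemma, using nothing beyond the compactness of $S$. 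Two points to tighten in yours: $h$ is continuous but not smooth at $t=T^+$ (its derivative involves $\int_{t_0}^{T}w(s)(t-s)^{-3/2}\,ds$, which may diverge as $t\to T^+$), though only the bound $|h(t)-h(T)|\le C\sqrt{t-T}$ is needed, and $h$ cancels in differences $P\Phi_1-P\Phi_2$ so the contraction constant is unaffected; and the concatenation check requires first verifying that \eqref{MR_integrated_ODEs} holds at $t=T$ itself, which follows by passing to the limit using Lemma \ref{HolderLimitLemma} (i).
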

\begin{proof}
    Let $(y,w)$ be a maximal solution of $(y,w)$ to \eqref{MR_integrated_ODEs} with domain $[t_0,T)$ corresponding to an initial condition $(y_0,w_0)$. Assume further that there exists a compact set $S\subset \mathscr{D}\times\mathbb{R}^n$ such that the solution remains inside $S$, i.e. $\forall \tau\in(t_0,T)$, $\exists t_1 \in (\tau,T)$ s.t. $(y(t_1),w(t_1))\in S$. We will find a contradiction and hence conclude that no such $S$ exists.
    
    Take a sequence $\{(y_n,w_n)\}_{n\in\mathbb{N}}$ defined by $(y_n,w_n)\coloneqq (y(t_n),w(t_n))$ for a sequence $t_n\to T$. Furthermore, we assume that $(y_n,w_n)\in S$ for all $n$. Since $S$ is compact, there exists a converging subsequence $\{(y_{n_k},w_{n_k})\}_{k\in\mathbb{N}}$ where $t_{n_k}\to T$. We call the limit of this sequence $(y_T,w_T)$:
    \begin{equation*}
        (y_{n_k},w_{n_k}) \xrightarrow[k\to\infty]{} (y_T,w_T) \in S
    \end{equation*}
    We may take an element of the sequence which is `arbitrarily close' to $(y_T,w_T)$ in the following way: $\forall \varepsilon>0$ $\exists (y_{t_1},w_{t_1})\in S$ s.t. $|T-t_1|<\varepsilon$. We will pick $t_1$ close to $T$, and use this as an initial condition $(y_{t_1},w_{t_1})$ for a Maxey-Riley equation with memory starting at a time before $t_1$, at $t_0$. In the setup, we have that $(y,w)$ is given on $[t_0,t_1)$ (and indeed beyond this to T) and by construction our Maxey-Riley equation starting at $t_1$ will be shown to extend our solution beyond $T$, hence contradicting maximality.
    
    We have that $w$ at $t_1$ is given by
    \begin{equation}
        w(t_1) = w_0 + \int_{t_0}^{t_1}-\mu w(s) - M_u(y(s),s)w(s) + B_u(y(s),s)\,ds - \kappa \mu^{1/2}\int_{t_0}^{t_1}\frac{w(s)}{\sqrt{t_1-s}}\,ds.
    \end{equation}
    If $w$ is extendable beyond $T$, then for $t>T$ we would have
    \begin{equation}
        w(t) = w_0 + \int_{t_0}^{t}-\mu w(s) - M_u(y(s),s)w(s) + B_u(y(s),s)\,ds - \kappa \mu^{1/2}\int_{t_0}^{t}\frac{w(s)}{\sqrt{t-s}}\,ds,
    \end{equation}
    and thus
    \begin{equation}\label{MR_starting_at_t1}
    \begin{split}
            w(t) - w(t_1) &= \int_{t_1}^{t}-\mu w(s) - M_u(y(s),s)w(s) + B_u(y(s),s)\,ds \\
            &\qquad- \kappa  \mu^{1/2}\int_{t_0}^{t}\frac{w(s)}{\sqrt{t-s}}\,ds + \kappa \mu^{1/2}\int_{t_0}^{t_1}\frac{w(s)}{\sqrt{t_1-s}}\,ds \\
            &= \int_{t_1}^{t}-\mu w(s) - M_u(y(s),s)w(s) + B_u(y(s),s) -\kappa\mu^{1/2}\frac{w(s)}{\sqrt{t-s}}\,ds \\
            &\qquad + \kappa\mu^{1/2}\int_{t_0}^{t_1}\frac{w(s)}{\sqrt{t_1-s}} - \frac{w(s)}{\sqrt{t-s}}\,ds.
    \end{split}
    \end{equation}
    We will consider equation \eqref{MR_starting_at_t1} together with
    \begin{equation}\label{MR_starting_at_t1_Yequation}
        y(t) = y(t_1) + \int_{t_1}^t w(s) + A_u(y(s),s)\,ds.
    \end{equation}
    We want to prove that this system has solutions on an interval of length $\delta$ depending only the compact set $S$ and not on $t_1$. We define the map
    \begin{align}\label{Pmap}
        (P\Phi)(t) &= \begin{pmatrix}
                       (P\Phi)_1(t) \\
                       (P\Phi)_2(t)
                      \end{pmatrix} 
    \end{align}
    where
    \begin{align}
        (P\Phi)_1(t) &=  y_{t_1} + \int_{t_1}^t \eta(s) + A_u(\xi(s),s)\,ds, \\
        \begin{split}
            (P\Phi)_2(t) &= w_{t_1} + \int_{t_1}^t - \left(\mu + \frac{\kappa\mu^{1/2}}{\sqrt{t-s}} + M_u(\xi(s),s)\right)\eta(s) + B_u(\xi(s),s)\,ds  \\
            &\qquad+ \kappa\mu^{1/2}\int_{t_0}^{t_1} \frac{w(s)}{\sqrt{t_1-s}} - \frac{w(s)}{\sqrt{t-s}}\,ds.
        \end{split}
    \end{align}
    Note that a solution to equations \eqref{MR_starting_at_t1} and \eqref{MR_starting_at_t1_Yequation} corresponds to a fixed point of the map $P$. We define $R$ to be such that $S\subseteq \bar B_0 (R)$, then lemmata \ref{NewFarazmandLemma1} and \ref{NewFarazmandLemma2} from Appendix \ref{AppendixFarazmandLemmas} give that, for $K = 4\max\{R,2R\sqrt{T-t_0}\}$ and any $\delta$ chosen such that
    \begin{align*}
        \delta+\mu\delta + 2\kappa\mu^{1/2}\sqrt{\delta} + L_b\delta < \delta+\mu\delta + 2\kappa\mu^{1/2}\sqrt{\delta} + 3L_b\delta &< 1/5 \,,\\
        (2+K)L_c\delta &< 1/4 \,,\\
        (2L_b\delta + A_u(0,t_0) + B_u(0,t_0))\delta &< K/4\,,
    \end{align*}
    the map $P$ has a unique fixed point.

    To complete our proof, notice that $\delta$ here depends only on the Euclidean norm of the initial conditions, i.e. on the compact set $S$. Hence we may choose $t_1$ to be within a distance $\delta$ from $T$ and thus we have extended our solution beyond the supposedly maximal domain. We have found the required contradiction and proven our theorem.
\end{proof}

\begin{theorem}\label{GlobalWeakSolutionTheorem}
    Assume that $u$ satisfies the conditions \eqref{assumptions}, then for any initial condition $(y_0,w_0)\in\mathscr{D}\times\mathbb{R}^n$, there exists a unique global solution $(y(t),w(t))$ (i.e. a solution on $[t_0,\infty)$) to the integral equation \eqref{MR_integrated_ODEs} with $(y(t_0),w(t_0))=(y_0,w_0)$.
\end{theorem}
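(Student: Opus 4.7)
The plan is to combine the structural results established in this section with an \emph{a priori} bound on the solution, obtained via the fractional Gr\"onwall inequality of Appendix \ref{GronwallAppendix}. Proposition \ref{MaximalSolutionProposition} already furnishes a unique maximal solution $(y,w)$ on some interval $[t_0,T)$, so it suffices to show $T=\infty$. I would argue by contradiction: assume $T<\infty$; then Theorem \ref{CompactSetTheorem} forces $(y(t),w(t))$ to leave every compact subset of $\mathscr{D}\times\mathbb{R}^n$ as $t\to T^-$, and I will exhibit such a compact set which in fact contains the whole orbit.

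The first step is to control both components by a single scalar, setting $\varphi(t):=|y(t)|+|w(t)|$. Under \eqref{assumptions}, $A_u$ and $B_u$ are Lipschitz in space with constant $L_c$ and have time derivatives bounded by $L_b$, and moreover $\|M_u\|_\infty=\|\nabla A_u\|_\infty\le L_b$. These give the crude bounds $|A_u(y,t)|\le|A_u(0,t_0)|+L_c|y|+L_b(t-t_0)$ and similarly for $B_u$. Plugging these into \eqref{MR_integrated_ODEs}, taking absolute values, and summing the two estimates, one obtains, for all $t\in[t_0,T)$, an inequality of the form
\begin{equation*}
\varphi(t)\;\le\;C_0(T)\;+\;C_1(T)\int_{t_0}^t\varphi(s)\,ds\;+\;\kappa\mu^{1/2}\int_{t_0}^t\frac{\varphi(s)}{\sqrt{t-s}}\,ds,
\end{equation*}
where $C_0(T),C_1(T)<\infty$ depend only on the data, the initial condition, and on the finite constant $T$.

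Next I would apply the fractional Gr\"onwall inequality of Appendix \ref{GronwallAppendix} (with $\alpha=1/2$) to this mixed linear-plus-fractional integral inequality. Its conclusion gives a uniform bound $\varphi(t)\le M$ on $[t_0,T)$ for some finite constant $M=M(T,y_0,w_0)$, expressible in terms of a Mittag-Leffler-type function. Consequently the trajectory is contained in the closed ball $\overline{B}_0(M)\subset\mathbb{R}^n\times\mathbb{R}^n$, which is compact in $\mathscr{D}\times\mathbb{R}^n$ (with $\mathscr{D}=\mathbb{R}^n$ in the standard setting; otherwise one uses that $|y|\le M$ together with the fact that $A_u$ is defined on the orbit to keep $y$ in a compact subset of $\mathscr{D}$). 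This contradicts Theorem \ref{CompactSetTheorem}, forcing $T=\infty$. Uniqueness is automatic: the solution we have just globalised is the maximal solution produced by Proposition \ref{MaximalSolutionProposition}, and Proposition \ref{UniquenessProposition} shows any other solution with the same initial condition coincides with it on their common domain.

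The main obstacle is packaging the singular Basset history term into a form amenable to the fractional Gr\"onwall lemma: the integrand $w(s)/\sqrt{t-s}$ must be absorbed into a fractional integral of $\varphi$ rather than of $w$ alone, and the Lipschitz contributions of $A_u$ and $B_u$ couple $y$ and $w$ into each other's equations, so one cannot bound $|w|$ and $|y|$ separately without extra care. Once these mixings are folded into the single scalar inequality above with the correct structure, the invocation of the fractional Gr\"onwall lemma is essentially a black-box application.
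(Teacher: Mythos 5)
Your proposal is correct and follows essentially the same route as the paper: the paper likewise takes the maximal solution from Proposition \ref{MaximalSolutionProposition}, assumes $T<\infty$, invokes Theorem \ref{CompactSetTheorem}, and contradicts it with an a priori bound on $|y|+|w|$ obtained from exactly the scalar integral inequality you write down and the fractional Gr\"onwall lemma of Appendix \ref{GronwallAppendix} (the paper packages this bound as Lemma \ref{GronwallLemma}, which you have effectively inlined). Your remark about $\overline{B}_0(M)$ needing to sit inside $\mathscr{D}$ is a point the paper glosses over, but it does not change the argument.
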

    To prove Theorem \ref{GlobalWeakSolutionTheorem}, we must first introduce a Lemma which finds an appropriate bound on the solution to \eqref{MR_integrated_ODEs}.
\begin{lemma}\label{GronwallLemma}
    If $u$ satisfies the conditions \eqref{assumptions} and $(y,w)$ satisfies \eqref{MR_integrated_ODEs}, on the interval $[t_0,T)$, there exists some $(C_Y,C_W)$ depending on $T,y_0,w_0,\kappa,\mu$ and $L_b$ such that \begin{align}
        \sup_{t\in [t_0,T)}|y(t)|&\leq C_Y,\\
        \sup_{t\in [t_0,T)}|w(t)|&\leq C_W.
    \end{align} 
\end{lemma}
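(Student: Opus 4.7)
The plan is to derive coupled integral inequalities for $|y|$ and $|w|$, eliminate $y$ via the classical Gronwall inequality, then apply the fractional Gronwall inequality of Appendix \ref{GronwallAppendix} to bound $|w|$, and finally propagate the bound back to $y$.

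First I would use assumption \eqref{assumptions} to convert the uniform bounds on $\partial_t A_u,\nabla A_u,\partial_t B_u,\nabla B_u$ into linear growth estimates. For $t\in[t_0,T)$ this gives
\begin{equation*}
|A_u(y,t)|\leq C_A+L_b|y|,\qquad |B_u(y,t)|\leq C_B+L_b|y|,\qquad |M_u(y,t)|\leq L_b,
\end{equation*}
where $C_A,C_B$ depend only on $|A_u(0,t_0)|, |B_u(0,t_0)|, L_b$ and $T-t_0$. Substituting these into the integral equation for $y$ in \eqref{MR_integrated_ODEs} yields
\begin{equation*}
|y(t)|\leq |y_0|+C_A(T-t_0)+\int_{t_0}^t|w(s)|\,ds+L_b\int_{t_0}^t|y(s)|\,ds,
\end{equation*}
and because the right-hand side (before the $|y|$ integral) is non-decreasing in $t$, the classical Gronwall inequality gives
\begin{equation*}
|y(t)|\leq\Bigl(|y_0|+C_A(T-t_0)+\int_{t_0}^t|w(s)|\,ds\Bigr)e^{L_b(T-t_0)}.
\end{equation*}

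Next I would feed this into the $w$-equation. Using $|M_u|\leq L_b$ and the linear-growth bound on $B_u$, then swapping the order of integration in the iterated $|w|$-integral (Fubini), the display above produces
\begin{equation*}
|w(t)|\leq\alpha+\beta\int_{t_0}^t|w(s)|\,ds+\kappa\mu^{1/2}\int_{t_0}^t\frac{|w(s)|}{\sqrt{t-s}}\,ds,
\end{equation*}
for constants $\alpha,\beta$ depending only on $T,y_0,w_0,\kappa,\mu,L_b$. Because $(t-s)^{-1/2}\geq(T-t_0)^{-1/2}$ on $[t_0,t]\subset[t_0,T)$, the ordinary integral term can be absorbed into the singular one via
\begin{equation*}
\int_{t_0}^t|w(s)|\,ds\leq(T-t_0)^{1/2}\int_{t_0}^t\frac{|w(s)|}{\sqrt{t-s}}\,ds,
\end{equation*}
so the estimate reduces to a single-kernel inequality
\begin{equation*}
|w(t)|\leq\alpha+\gamma\int_{t_0}^t(t-s)^{-1/2}|w(s)|\,ds
\end{equation*}
for a new constant $\gamma$ with the same dependencies.

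At this point the main obstacle, namely the singular kernel $(t-s)^{-1/2}$ which defeats classical Gronwall, is handled by the fractional Gronwall inequality of \cite{Lin2013a} recorded in Appendix \ref{GronwallAppendix}. Applying it with exponent $1/2$ yields a bound $C_W$ on $|w(t)|$ over $[t_0,T)$, expressible through a Mittag--Leffler function evaluated at $\gamma\Gamma(1/2)(T-t_0)^{1/2}$, which is finite because $T<\infty$. Substituting $C_W$ into the earlier inequality for $|y|$ produces the constant $C_Y$, completing the proof.
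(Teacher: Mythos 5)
Your proof is correct and rests on the same key tool as the paper's, namely the fractional Gr\"onwall inequality of Lin recorded in Appendix \ref{GronwallAppendix}; the difference lies only in how the coupling between $y$ and $w$ is handled. The paper sums the two estimates, setting $a(t)=|y(t)|+|w(t)|$, and feeds the resulting inequality
\begin{equation*}
a(t)\leq a(t_0)+\cdots+\max\{1,\mu+L_b\}\int_{t_0}^t a(s)\,ds+\kappa\mu^{1/2}\int_{t_0}^t (t-s)^{-1/2}a(s)\,ds
\end{equation*}
directly into the two-kernel case ($\beta_1=1$, $\beta_2=1/2$) of Theorem \ref{LinGronwallLemma}. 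You instead decouple: classical Gr\"onwall eliminates $y$ first, Fubini collapses the resulting iterated integral of $|w|$, and the elementary bound $(t-s)^{1/2}\leq (T-t_0)^{1/2}$ absorbs the regular kernel into the singular one, so that only the single-kernel ($n=1$, $\beta=1/2$) instance of the fractional inequality is needed. Your route is slightly longer but invokes a weaker version of the appendix result and makes the shape of the final bound (a Mittag--Leffler-type series in $(T-t_0)^{1/2}$) more explicit; the paper's route is shorter because Lin's theorem already accommodates several kernels simultaneously. Both arguments rely on the continuity, hence local boundedness, of $(y,w)$ on $[t_0,T)$ from Remark \ref{FarazmandTheorem} so that the Gr\"onwall inequalities may be applied on each compact subinterval, and both produce constants that additionally depend on $|A_u(0,t_0)|$ and $|B_u(0,t_0)|$, consistent with the paper's own proof.
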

\begin{proof}[Proof (of Lemma \ref{GronwallLemma}).]
See Appendix \ref{TechnicalProofsAppendix}.
\end{proof}
\begin{proof}[Proof (of Theorem \ref{GlobalWeakSolutionTheorem}).]
    Let $(y,w)$ be the unique maximal solution from Proposition \ref{MaximalSolutionProposition} and $[t_0,T)$ its domain. We aim to show that $[t_0,T)=[t_0,\infty)$. Assume the contrary is true, then $T$ is finite. By Theorem \ref{CompactSetTheorem}, $(y,w)$ leaves any compact set $S\in\mathscr{D}\times\mathbb{R}^n$ as $t\to T$. Take a specific compact set
    \begin{equation*}
        S = \bar B_0(r_1)\times \bar B_0(r_2).
    \end{equation*}
    For $t$ sufficiently close to $T$, we know $\|y\|>r_1$ and $\|w\|>r_2$. Since $r_1,r_2$ were chosen arbitrarily, we may deduce that
    \begin{equation*}
        \|y\|,\|w\| \to \infty,\quad \text{as } t\to T.
    \end{equation*}
    On the contrary, we have boundedness of $y$ and $w$ from Lemma \ref{GronwallLemma}. Thus we have reached a contradiction and proven our theorem.
\end{proof}

\section{From weak to strong solutions}\label{WEAKTOSTRONGSOLUTIONS}

In the previous section we showed the global existence of a weak solution of equation \eqref{MR_integrated_ODEs}, extending the known local in time result. We will now explore the observation in \cite{Farazmand2015} that strong solutions exist under more restrictive conditions and `mild' solutions (which we will refer to here as weak) exist in general. The interesting relationship between this result and the Maxey-Riley equation in the context of fractional derivatives is discussed in section \ref{FractionalSection}.

In \cite{Farazmand2015} it is stated without proof that solutions to \eqref{MR_integrated_ODEs} are not necessarily (continuously) differentiable and hence are not, in general, also solutions to \eqref{MR_FDEs}. Farazmand and Haller go on to explain that, if continuously differentiable solutions to the differential form of the equation \eqref{MR_FDEs} exist, then under the special initial condition $w(t_0)=0$ the Basset history term takes the form
\begin{equation}\label{HistoryRLtoCaputo}
	\frac{d}{dt}\int_{t_0}^t\frac{w(s)}{\sqrt{t-s}}\,ds = \int_{t_0}^t\frac{\dot w(s)}{\sqrt{t-s}}\,ds.
\end{equation}
Reviewing Definitions \ref{RiemannLiouvilleDefinition} and \ref{CaputoDefinition}, we see that this is equivalent to saying that, in this special case, the Riemann-Liouville fractional derivative takes the form of a Caputo fractional derivative of the same order. Thus, \underline{if} continuously differentiable solutions to \eqref{MR_FDEs} exist \underline{and} $w(t_0)=0$, then \eqref{MR_FDEs} can be written as
\begin{equation}\label{MR_FDEsCaputo}
    \begin{aligned}
        _{t_0}D^1_ty(t) &= w + A_u(y,t), \\
        _{t_0}D^1_tw(t) &= -\mu w - M_u(y,t)w-\kappa\mu^{1/2}\sqrt{\pi}\;_{t_0}^CD^{1/2}_tw(t) + B_u(y,t).
    \end{aligned}
\end{equation}
In \cite{Farazmand2015}, it is proven that this equation \eqref{MR_FDEsCaputo} indeed admits continuously differentiable solutions with $y(t_0)=y_0$ and $w(t_0)=0$. It still remains to prove under which conditions the solution to \eqref{MR_integrated_ODEs} is differentiable at $t_0$. This will involve proving the necessary conditions for the fractional integral
\begin{equation}\label{MRFractionalIntegral}
	\int_{t_0}^t\frac{w(s)}{\sqrt{t-s}}\,ds,
\end{equation}
to be differentiable at $t_0$. Without proving this, the expression \eqref{HistoryRLtoCaputo} does not make sense at $t=t_0$ and thus we cannot say that continuously differentiable solutions to \eqref{MR_FDEsCaputo} are also continuously differentiable solutions to \eqref{MR_FDEs}. In the following, we resolve this issue. 

When considering the differentiability of solutions to \eqref{MR_integrated_ODEs}, we can see that any issues will arise from the history term \eqref{MRFractionalIntegral}. In particular, with the following results we clarify under what assumptions this term is differentiable at $t_0$.

We start with some lemmata. In particular, we study the smoothness properties of the integral \eqref{MRFractionalIntegral} which may assist in proving differentiability.
Recall that since $w$ is a solution to \eqref{MR_integrated_ODEs}, it satisfies the continuity and boundedness properties in Remark \ref{FarazmandTheorem}. We can certainly prove that the integral \eqref{MRFractionalIntegral} is continuous at $t_0$. Indeed, for every $\epsilon > 0$ there exists $\delta > 0$ such that $2K\sqrt{\delta}<\epsilon$. For this $\epsilon,\delta$ we have that for $|t-t_0|<\delta$:
\begin{equation}\label{IntegralContinuityt_0}
	\begin{aligned}
		\left| \int_{t_0}^t\frac{w(s)}{\sqrt{t-s}}\,ds -\int_{t_0}^{t_0}\frac{w(s)}{\sqrt{t_0-s}}\,ds \right| &= \left| \int_{t_0}^t\frac{w(s)}{\sqrt{t-s}}\,ds \right| \\
		&\leq K\int_{t_0}^t\frac{1}{\sqrt{t-s}}\,ds \leq 2K\sqrt{t-t_0} < \epsilon.
	\end{aligned}
\end{equation}
From this, one can deduce that the integral is 1/2-H\"older continuous.

\begin{lemma}\label{HolderLimitLemma}
    Assuming $(y,w)$ is a solution of \eqref{MR_integrated_ODEs}, we have that
    \begin{itemize}
    \item[(i)] the integral \eqref{MRFractionalIntegral} is 1/2-H\"older, i.e. there exists some constant $C>0$ such that for any $t_1,t_2$ with $t_0<t_1<t_2$ we have
\begin{equation}\label{HolderStatementIntegral}
	\left| \int_{t_0}^{t_2}\frac{w(s)}{\sqrt{t_2-s}}\,ds -\int_{t_0}^{t_1}\frac{w(s)}{\sqrt{t_1-s}}\,ds \right| \leq C|t_2-t_1|^{1/2}\,,
\end{equation}

and
    \item[(ii)] if $w(t_0) = 0$ then the following limit exists and is equal to zero
    \begin{equation}
        \lim_{t\to t_0}\frac{w(t)}{\sqrt{t-t_0}} = 0\,.
    \end{equation}
    \end{itemize}
\end{lemma}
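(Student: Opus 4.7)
The plan is to handle the two parts separately, both using the boundedness of $w$ (by some constant $K$, as in the inequality \eqref{IntegralContinuityt_0}) on $[t_0,T)$ which comes from Remark \ref{FarazmandTheorem}, and in (ii) additionally using the continuity of $w$ at $t_0$.

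For part (i), I would split the integral with upper limit $t_2$ as $\int_{t_0}^{t_2} = \int_{t_0}^{t_1} + \int_{t_1}^{t_2}$, so that the quantity in \eqref{HolderStatementIntegral} becomes
\begin{equation*}
    \int_{t_0}^{t_1}w(s)\left(\frac{1}{\sqrt{t_2-s}}-\frac{1}{\sqrt{t_1-s}}\right)\,ds + \int_{t_1}^{t_2}\frac{w(s)}{\sqrt{t_2-s}}\,ds.
\end{equation*}
The second term is bounded in modulus by $2K\sqrt{t_2-t_1}$ just as in \eqref{IntegralContinuityt_0}. For the first term I would exploit that $\tfrac{1}{\sqrt{t_1-s}}-\tfrac{1}{\sqrt{t_2-s}}\geq 0$ and compute the integral of this difference explicitly to obtain $2\sqrt{t_1-t_0}-2\sqrt{t_2-t_0}+2\sqrt{t_2-t_1}$. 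The subadditivity $\sqrt{t_2-t_0}\leq\sqrt{t_1-t_0}+\sqrt{t_2-t_1}$ shows this is nonnegative, and since $\sqrt{t_1-t_0}\leq\sqrt{t_2-t_0}$ it is at most $2\sqrt{t_2-t_1}$. Combining the two contributions yields $C=4K$ as the H\"older constant.

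For part (ii), I would start from the integrated equation in \eqref{MR_integrated_ODEs} with $w_0=0$:
\begin{equation*}
    w(t) = \int_{t_0}^{t}\left(-\mu w(s)-M_u(y(s),s)w(s)+B_u(y(s),s)\right)\,ds - \kappa\mu^{1/2}\int_{t_0}^{t}\frac{w(s)}{\sqrt{t-s}}\,ds.
\end{equation*}
Under \eqref{assumptions} the integrand of the first integral is bounded by some constant $C_F$ on $[t_0,T)$, so the first integral contributes at most $C_F(t-t_0)$. For the Basset term, the crude bound $|w|\leq K$ gives only $O(\sqrt{t-t_0})$, which upon dividing by $\sqrt{t-t_0}$ does not vanish; this is the genuine difficulty of the statement. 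The point is to refine this estimate using the continuity of $w$ at $t_0$ together with $w(t_0)=0$: given $\varepsilon>0$, choose $\delta>0$ so that $|w(s)|<\varepsilon$ for $s\in[t_0,t_0+\delta]$, which replaces $K$ by $\varepsilon$ in the Basset estimate and yields
\begin{equation*}
    |w(t)|\leq C_F(t-t_0) + 2\kappa\mu^{1/2}\varepsilon\sqrt{t-t_0}
\end{equation*}
for $t\in[t_0,t_0+\delta]$. Dividing by $\sqrt{t-t_0}$ and taking $\limsup_{t\to t_0}$ gives a bound of $2\kappa\mu^{1/2}\varepsilon$; since $\varepsilon>0$ is arbitrary, the limit is zero.

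The main obstacle is the one just identified in (ii): the natural bound $|w(t)|=O(\sqrt{t-t_0})$ from $w$ merely being bounded is exactly borderline for the claim, so the argument must genuinely use that $w$ is continuous and vanishes at $t_0$, applied inside the Basset integral rather than only after it is evaluated. Part (i) is comparatively routine, being a direct computation once the correct splitting is chosen.
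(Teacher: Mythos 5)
Your proof is correct. Part (i) follows the same decomposition as the paper's proof (split off $\int_{t_1}^{t_2}$, then bound the kernel difference on $[t_0,t_1]$ by pulling out $K$ and integrating explicitly); your use of the subadditivity $\sqrt{t_2-t_0}\le\sqrt{t_1-t_0}+\sqrt{t_2-t_1}$ is a marginally cleaner way to finish and even yields a slightly better constant ($4K$ versus the paper's $6K$), but it is not a different argument. Part (ii) is where you genuinely diverge. The paper does \emph{not} use an $\varepsilon$--$\delta$ continuity argument inside the Basset integral; instead it substitutes the integral equation for $w(s)$ a second time into $\int_{t_0}^t w(s)(t-s)^{-1/2}\,ds$, producing a double integral, and then shows that the crude bound $|w|\le K$ suffices because the iterated kernel gains an extra power: $\int_{t_0}^t (t-s)^{-1/2}\int_{t_0}^s (s-r)^{-1/2}\,dr\,ds = O\bigl((t-t_0)^{3/2}\bigr)$, so division by $\sqrt{t-t_0}$ still sends everything to zero. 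Your approach instead uses that $w$ is continuous with $w(t_0)=0$ to replace $K$ by an arbitrary $\varepsilon$ on a short interval, giving $\limsup_{t\to t_0}|w(t)|/\sqrt{t-t_0}\le 2\kappa\mu^{1/2}\varepsilon$. Both are valid: yours is shorter and more elementary, resting only on the continuity of paths recorded in Remark \ref{FarazmandTheorem}; the paper's bootstrapping is self-contained at the level of the integral equation and foreshadows the iteration technique used later in Lemma \ref{w_HolderLemma}, and it produces the explicit integral $\int_{t_0}^t \sqrt{s-t_0}/\sqrt{t-s}\,ds = \tfrac{\pi}{2}(t-t_0)$ that is reused verbatim in the proof of Theorem \ref{thm: MR_strong_solutions}. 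You correctly identified that the borderline $O(\sqrt{t-t_0})$ bound is the crux, and your refinement closes it.
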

\begin{proof}
See Appendix \ref{TechnicalProofsAppendix}.
\end{proof}

\begin{lemma}\label{w_HolderLemma}
    A solution $w(t)$ of \eqref{MR_integrated_ODEs} is $1/2$-H\"older on the interval $[t_0,t_0+\epsilon)$ and locally Lipschitz on $[t_0+\epsilon,\infty)$, for any $\epsilon>0$.
\end{lemma}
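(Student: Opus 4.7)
I would split the proof into two independent arguments, one for each regularity claim.

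For the $1/2$-H\"older property on $[t_0, t_0+\epsilon)$, the natural starting point is the integral equation \eqref{MR_integrated_ODEs}. Writing $F(s):= -\mu w(s) - M_u(y(s),s)w(s) + B_u(y(s),s)$ and $I(t):=\int_{t_0}^t w(s)/\sqrt{t-s}\,ds$, we have for $t_0\leq t_1<t_2<t_0+\epsilon$
\begin{equation*}
    w(t_2) - w(t_1) = \int_{t_1}^{t_2} F(s)\,ds - \kappa\mu^{1/2}\bigl[I(t_2)-I(t_1)\bigr].
\end{equation*}
Under \eqref{assumptions} and the continuity and boundedness of $w$ on bounded intervals (Remark \ref{FarazmandTheorem}), the integrand $F$ is bounded, so the first term is $\mathcal{O}(t_2-t_1)$. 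The second difference is controlled by Lemma \ref{HolderLimitLemma}(i) by $C|t_2-t_1|^{1/2}$. Combining gives the $1/2$-H\"older bound.

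For the locally Lipschitz property on $[t_0+\epsilon, \infty)$, the key insight is that although the Basset history integral of a merely bounded function is only $1/2$-H\"older, it acquires improved regularity away from $t_0$. A clean route is to establish that $w$ is differentiable on $(t_0, \infty)$ with derivative bounded on every compact subset. Formally differentiating \eqref{MR_integrated_ODEs}, using the integration-by-parts identity
\begin{equation*}
\int_{t_0}^t \frac{w(s)}{\sqrt{t-s}}\,ds = 2w_0\sqrt{t-t_0} + 2\int_{t_0}^t\sqrt{t-s}\,\dot w(s)\,ds,
\end{equation*}
produces an Abel integral equation of the second kind for $\dot w$,
\begin{equation*}
    \dot w(t) + \kappa\mu^{1/2}\int_{t_0}^t\frac{\dot w(s)}{\sqrt{t-s}}\,ds = F(t) - \frac{\kappa\mu^{1/2}w_0}{\sqrt{t-t_0}},
\end{equation*}
whose forcing is in $L^\infty([t_0+\epsilon,T])$ for every $T>t_0+\epsilon$ and carries only an integrable singularity at $t_0$. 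Standard Abel-equation theory (the resolvent is expressible through a Mittag-Leffler function) delivers a unique solution whose order of singularity at $t_0$ matches that of the forcing; in particular $\dot w$ is bounded on $[t_0+\epsilon, T]$, yielding local Lipschitz continuity of $w$ there.

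The main obstacle is justifying the formal differentiation given only the a priori $1/2$-H\"older regularity from the first part. I would introduce $\dot w$ first as a weak derivative via the integration-by-parts identity (which makes sense for $1/2$-H\"older $w$), show through the Abel equation that this weak derivative is in fact continuous and bounded away from $t_0$, and then conclude that $w$ is genuinely $C^1$ on $(t_0,\infty)$. An alternative, more computational route is to estimate the Basset history difference directly on $[t_0+\epsilon,\infty)$ by isolating and cancelling the singular $\sqrt{t_2-t_1}$ contributions coming from the upper-limit piece $\int_{t_1}^{t_2} w(s)/\sqrt{t_2-s}\,ds$ and the kernel-derivative piece $\int_{t_0}^{t_1} w(s)[(t_2-s)^{-1/2}-(t_1-s)^{-1/2}]\,ds$, which is technically heavier but avoids invoking Abel theory.
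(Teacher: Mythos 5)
Your first half (the $1/2$-H\"older bound near $t_0$) is exactly the paper's argument: Lipschitz bulk terms plus Lemma \ref{HolderLimitLemma}(i) for the Basset difference. For the locally Lipschitz claim away from $t_0$ you take a genuinely different route. The paper never differentiates anything: it rescales the Basset integral onto $[0,1]$ via $q=r_1 r_0 t$, obtains a self-referential bound of the form $|w(r_0t)-w(r_0s)|\leq K_0|t-s|+\kappa\mu^{1/2}\sqrt{r_0 s}\int_0^1 |w(r_1r_0t)-w(r_1r_0s)|(1-r_1)^{-1/2}\,dr_1$, iterates it $k$ times, and shows the iterated operator norm $\mathcal{I}_k\lesssim (\kappa\mu^{1/2}\sqrt{T})^{k+1}/\sqrt{k!}\to 0$ via Wallis-integral estimates, after which $(1-\mathcal{I}_k)\sup_\alpha|w(\alpha t)-w(\alpha s)|\leq K_k|t-s|$ closes the argument. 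Your Abel-equation route is shorter where it works and has the advantage of identifying $\dot w$ explicitly (it is essentially how the paper later handles differentiability in Theorem \ref{thm: MR_strong_solutions}); the paper's iteration has the advantage of needing no a priori discussion of whether $\dot w$ exists.

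That said, the bootstrapping step as you describe it does not work: a merely $1/2$-H\"older function need not be absolutely continuous, so the integration-by-parts identity does not "make sense for $1/2$-H\"older $w$" and cannot be used to \emph{define} a weak derivative represented by a function. The repair is to reverse the logic: treat the second-kind Abel equation $a(t)+\kappa\mu^{1/2}\int_{t_0}^t a(s)(t-s)^{-1/2}\,ds = F(t)-\kappa\mu^{1/2}w_0(t-t_0)^{-1/2}$ as a \emph{linear} Volterra equation with known forcing (since $F$ is built from the already-constructed continuous $(y,w)$), solve it for $a\in L^1_{\mathrm{loc}}$ by the Mittag--Leffler resolvent, set $\tilde w=w_0+\int_{t_0}^t a$, verify via Fubini and integration by parts (now legitimate, $\tilde w$ being absolutely continuous) that $\tilde w$ satisfies \eqref{MR_integrated_ODEs}, and invoke Proposition \ref{UniquenessProposition} to conclude $\tilde w=w$; boundedness of $a$ on $[t_0+\epsilon,T]$ then gives the Lipschitz claim. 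With that restructuring your proof is sound. Your proposed fallback of "directly estimating the Basset difference and cancelling the singular contributions" would not succeed: using only boundedness of $w$, both the upper-limit piece $\int_{t_1}^{t_2}w(s)(t_2-s)^{-1/2}\,ds$ and the kernel-difference piece are genuinely of order $\sqrt{t_2-t_1}$ with no cancellation between them, so the upgrade from H\"older to Lipschitz must come from the self-consistency of the equation (the paper's iteration) or from solving for the derivative (your Abel route), not from a finer splitting of the integral.
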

\begin{proof}
    By Lemma \ref{HolderLimitLemma} (i), there exists some constant $C>0$ such that for any $t_1,t_2$ with $t_0<t_1<t_2$ we have that the memory term in \eqref{MR_integrated_ODEs} is  $1/2$-H\"older on the interval $[t_0,t_0+\epsilon)$ (see \eqref{HolderStatementIntegral}). As all other terms in \eqref{MR_integrated_ODEs} are Lipschitz continuous, we deduce the  $1/2$-H\"older continuity of $w$. 
    
    It remains to prove that $w(t)$ is Lipschitz on any interval $[t_0+\epsilon,T]$ , for any $\epsilon>0$ and $T>t_0+\epsilon$. Without loss of generality we will assume that $t_0=0$. The definition of $w(t)$ is then
    \begin{equation*}
        w(t) = w(0) + \int_{0}^t \left( -\mu w(s) - M_u(y(s),s)w(s)-\kappa\mu^{1/2}\frac{w(s)}{\sqrt{t-s}} + B_u(y(s),s)\right)\,ds\,, 
    \end{equation*}
    and for $r_0\in [0,1]$ and $t,s\in [\epsilon,\infty)$ we have
    \begin{equation}
    \begin{aligned}
        w(r_0t)-w(r_0s) &= \int_{r_0s}^{r_0t} \left( -\mu w(q) - M_u(y(q),q)w(q) + B_u(y(q),q)\right) \\ &\quad -\kappa\mu^{1/2}\left(\int_0^{r_0t}\frac{w(q)}{\sqrt{r_0t-q}}\,dq - \int_0^{r_0s}\frac{w(q)}{\sqrt{r_0s-q}}\,dq \right)\,.
    \end{aligned}
    \end{equation}
    Making substitutions $q=r_1r_0t$ and $q=r_1r_0s$ in the penultimate and final integrals in the above equation respectively, we have 
    \begin{equation}
    \begin{aligned}
        w(r_0t)-w(r_0s) &= \int_{r_0s}^{r_0t} \left( -\mu w(q) - M_u(y(q),q)w(q) + B_u(y(q),q)\right) \\ &\quad -\kappa\mu^{1/2}\left(\sqrt{r_0t}\int_0^{1}\frac{w(r_1r_0t)}{\sqrt{1-r_1}}\,dr_1 - \sqrt{r_0s}\int_0^{1}\frac{w(r_1r_0s)}{\sqrt{1-r_1}}\,dr_1 \right) \\
        &= \int_{r_0s}^{r_0t} \left( -\mu w(q) - M_u(y(q),q)w(q) + B_u(y(q),q)\right) \\ & -\kappa\mu^{1/2}\left((\sqrt{r_0t}-\sqrt{r_0s})\int_0^{1}\frac{w(r_1r_0t)}{\sqrt{1-r_1}}\,dr_1 + \sqrt{r_0s}\int_0^{1}\frac{w(r_1r_0t)-w(r_1r_0s)}{\sqrt{1-r_1}}\,dr_1 \right)\,,
    \end{aligned}
    \end{equation}
    and from local boundedness of $w$ and the assumptions \eqref{assumptions}, this implies
    \begin{equation}
        |w(r_0t)-w(r_0s)| \leq K_0|t-s| + \kappa\mu^{1/2}\sqrt{r_0s}\int_0^1\frac{|w(r_1r_0t)-w(r_1r_0s|)}{\sqrt{1-r_1}}\,dr_1 \,.
    \end{equation}
 To obtain the above estimate, we used the fact that  (recall that $r_0\in[0,1]$)
    \[
    |\sqrt{r_0t}-\sqrt{r_0s}|\le \sqrt{r_0}\frac {t-s}{\sqrt{t}+\sqrt{s}}\le \frac {1}{2\sqrt{\epsilon}}|t-s|\,.
    \]
    
    Within the integrand of the above inequality, we may iterate the argument by substituting in the definition of $w$ to evaluate $w(r_1r_0t)-w(r_1r_0s)$. We claim that after iterating $k$ times we have
    \begin{equation}\label{InductiveHypothesis}
    \begin{aligned}
        |w(r_0t)-w(r_0s)| \leq K_k|t-s| &+(\kappa\mu^{1/2}\sqrt{r_0s})^{k+1}\int_0^1 \sqrt{\frac{r_1^k}{1-r_1}}\int_0^1\sqrt{\frac{r_2^{k-1}}{1-r_2}}\int_0^1 \cdots \\
        \cdots&\int_0^1 \frac{1}{\sqrt{1-r_{k+1}}}\left|w(r_{k+1}r_k\dots r_0t) - w(r_{k+1}r_k\dots r_0s) \right| dr_{k+1}\dots dr_1 \,,
    \end{aligned}
    \end{equation}
    where 
    $\{K_i,i\in\mathbb{N}\}$, $\kappa$ is an appropriately chosen collection of constants.
    We will prove this by induction, where the above calculation acts as a base case. Suppose our inductive hypothesis \eqref{InductiveHypothesis} is true, then we iterate once more by substituting in the difference
    \begin{equation*}
    \begin{aligned}
        \big|w(r_{k+1}r_k\dots r_0t) &- w(r_{k+1}r_k\dots r_0s) \big| \leq K_k|t-s| \\
        &\quad + \kappa\mu^{1/2} \bigg| \big(\sqrt{r_{k+1}\dots r_0t}-\sqrt{r_{k+1}\dots r_0s}\big)\int_0^1\frac{w(r_{k+2}\dots r_0t)}{\sqrt{1-r_{k+2}}}\,dr_{k+2}\bigg| \\
        &\quad + \kappa\mu^{1/2} \bigg|\sqrt{r_{k+1}\dots r_0s}\int_0^1\frac{w(r_{k+2}\dots r_0t)-w(r_{k+2}\dots r_0s)}{\sqrt{1-r_{k+2}}}\,dr_{k+2}\bigg| \\
        &\leq  K_{k+1}|t-s| + \kappa\mu^{1/2}\sqrt{r_{k+1}\dots r_0s}\int_0^1\frac{|w(r_{k+2}\dots r_0t)-w(r_{k+2}\dots r_0s)|}{\sqrt{1-r_{k+2}}}\,dr_{k+2}\,.
    \end{aligned}
    \end{equation*}
    Using our inductive hypothesis \eqref{InductiveHypothesis}, we get
    \begin{equation}\label{InductiveConclusion}
    \begin{aligned}
        |w(r_0t)-w(r_0s)| \leq K_{k+1}|t-s| &+(\kappa\mu^{1/2}\sqrt{r_0s})^{k+2}\int_0^1 \sqrt{\frac{r_1^{k+1}}{1-r_1}}\int_0^1\sqrt{\frac{r_2^{k}}{1-r_2}}\int_0^1 \cdots \\
        \cdots&\int_0^1 \frac{1}{\sqrt{1-r_{k+2}}}\left|w(r_{k+2}\dots r_0t) - w(r_{k+2}\dots r_0s) \right| dr_{k+2}\dots dr_1 \,,
    \end{aligned}
    \end{equation}
    and hence we have proven our claim by induction.
    
    Noting that $r_i\in [0,1]$ for each $i\in\mathbb{N}$, we take supremum over $\{r_i,i\in\mathbb{N}\}$ to deduce that 
    \begin{equation}
        \sup_{\alpha\in[0,1]} |w(\alpha t)-w(\alpha s)| \leq K_k|t-s| + \mathcal{I}_k \sup_{\alpha\in[0,1]} |w(\alpha t)-w(\alpha s)|\,,
    \end{equation}
    where 
    \begin{equation}
        \mathcal{I}_k = (\kappa\mu^{1/2}\sqrt{r_0s})^{k+1}\int_0^1 \sqrt{\frac{r_1^k}{1-r_1}}\int_0^1\sqrt{\frac{r_2^{k-1}}{1-r_2}}\int_0^1 \cdots \int_0^1 \frac{1}{\sqrt{1-r_{k+1}}} dr_{k+1}\dots dr_1\,.
    \end{equation}
    We now observe the integral
    \begin{equation}
        a_k = \int_0^1 \sqrt{\frac{x^k}{1-x}}\,dx = 2\int_0^{\pi/2}\sin^{k+1}\theta\,d\theta\,.
    \end{equation}
    Note that the sequence $(a_k)_k$ converges to zero as $k$ tends to infinity, and does so at the same rate as $1/\sqrt{k}$. 
    Indeed, note that
    \begin{equation*}
        a_k = 2\int_0^{\pi/2}\sin^{k-1}\theta(1-\cos^2\theta)\,d\theta = a_{k-2} - \frac{a_k}{k}\,,
    \end{equation*}
    where we have used that
    \begin{equation*}
        0 = \frac{2\sin^k\theta}{k}\cos\theta \bigg|_0^{\pi/2} = \int_0^{\pi/2} 2\sin^{k-1}\theta\cos^2\theta - \frac{2\sin^{k+1}\theta}{k}\,d\theta = \int_0^{\pi/2} 2\sin^{k-1}\theta\cos^2\theta\,d\theta - \frac{a_k}{k}\,.
    \end{equation*}
    Therefore we have that
    \begin{equation*}
        a_k = \frac{k}{k+1}a_{k-2} = \left( 1 - \frac{1}{k+1} \right)a_{k-2}\,.
    \end{equation*}
    Since $1-y \leq \exp(-y)$ for positive $y$, we have, for $k$ odd
    \begin{equation*}
        a_k \leq \frac{\pi}{2}\exp\left( -\sum_{n=2}^{\frac{k+1}{2}}\frac{1}{2n} \right)\,,
    \end{equation*}
    and the case where $k$ is even is similar. Note that we have used the fact that $a_1 = \pi/2$, in the case where $k$ is even we would instead use $a_0=2$. Therefore
    \begin{equation*}
        \sqrt{k}a_k \leq \frac{\pi}{2}\exp\left(\frac12\log k -\frac12\sum_{n=2}^{\frac{k+1}{2}}\frac{1}{n} \right)\,.
    \end{equation*}
In the following we will use the fact that the sequence $(b_k)_{k}$ defined as 
\[
b_k=\log k -\sum_{n=1}^k \frac{1}{n}
\]
converges to the Euler-Mascheroni constant $\gamma$. We have that 
\begin{equation*}
        \sqrt{k}a_k \leq \frac{\pi}{2}\exp\left(\frac12\log k -\frac12\left(\log(\frac{k+1}{2})-1 - b_{\frac{k+1}{2}} \right)\right)\,.
    \end{equation*}
Observe that the limit on the right hand side converges to $\frac{\pi}{2}\exp\big( \frac{\log(2)}{2} + \frac12 + \frac{\gamma}{2} \big)$. In particular, the sequence on the right hand side is bounded, since it converges, and hence the sequence $\sqrt{k}a_k$ is bounded above. The analysis for the even terms is similar. We have found that there exists $M$ such that 
    \begin{equation*}
        \sqrt{k}a_k \leq M \quad\implies\quad a_k \leq \frac{M}{\sqrt{k}}\,,
    \end{equation*}
    thus $\mathcal{I}_k \rightarrow 0$ as
    \begin{equation*}
        \mathcal{I}_k \leq (M\kappa\mu^{1/2}\sqrt{r_0s})^{k+1} \frac{1}{\sqrt{k!}} \rightarrow 0\,\quad\hbox{as}\quad k\rightarrow \infty\,.
    \end{equation*}
    Hence we choose $k\in\mathbb{N}$ sufficiently large to ensure that
    \begin{equation}
        \mathcal{I}_k < 1\,,
    \end{equation}
    and thus \eqref{InductiveHypothesis} implies
    \begin{equation}
        (1-\mathcal{I}_k)\sup_{\alpha\in [0,1]}|w(\alpha t)-w(\alpha s)| \leq K_k|t-s|\,.
    \end{equation}
    Rearranging gives
    \begin{equation}
        |w(t)-w(s)| \leq \sup_{\alpha\in [0,1]}|w(\alpha t)-w(\alpha s)| \leq \frac{K_k}{1-\mathcal{I}_k}|t-s|\,,
    \end{equation}
    and we have thus proven our claim.
\end{proof}

We are now in a position to prove the main result of the section.

\begin{theorem}\label{thm: MR_strong_solutions}
    Under the assumptions \eqref{assumptions}, there exists a strong solution of the Maxey-Riley equation \eqref{MR_ODEs} if and only if $w(t_0) = 0$.
\end{theorem}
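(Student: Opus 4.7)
The plan is to translate the question of strong solvability into the behaviour of the integration-by-parts identity
\[
\frac{d}{dt}\int_{t_0}^t\frac{w(s)}{\sqrt{t-s}}\,ds \;=\; \frac{w(t_0)}{\sqrt{t-t_0}} + \int_{t_0}^t\frac{\dot w(s)}{\sqrt{t-s}}\,ds,
\]
which exhibits a $1/\sqrt{t-t_0}$ singularity at $t_0$ precisely when $w(t_0)\neq 0$. Since $\dot y = w + A_u(y,t)$ is a genuine ODE with continuous right-hand side (by the continuity part of Remark \ref{FarazmandTheorem} and assumption \eqref{assumptions}), the component $y$ of any weak solution is automatically $C^1$. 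Thus the entire question reduces to the differentiability of $w$ at $t_0$, and both directions are driven by the display above.

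For the ``only if'' direction, suppose $(y,w)$ is a strong solution, so that $\dot w(t_0)$ exists as a finite right-limit. The memoryless part of the integral equation is $O(t-t_0)=o(\sqrt{t-t_0})$ because the integrand is bounded near $t_0$. For the Basset term, decompose $w(s)=w(t_0)+(w(s)-w(t_0))$ and use the continuity of $w$ at $t_0$ to obtain
\[
\int_{t_0}^t\frac{w(s)}{\sqrt{t-s}}\,ds = 2w(t_0)\sqrt{t-t_0} + R(t), \qquad |R(t)|\le 2\sqrt{t-t_0}\sup_{s\in[t_0,t]}|w(s)-w(t_0)|=o(\sqrt{t-t_0}).
\]
Combining these with \eqref{MR_integrated_ODEs} gives
\[
w(t)-w(t_0) \;=\; -2\kappa\mu^{1/2}w(t_0)\sqrt{t-t_0}+o(\sqrt{t-t_0}),
\]
so that $(w(t)-w(t_0))/(t-t_0)$ behaves like $-2\kappa\mu^{1/2}w(t_0)/\sqrt{t-t_0}$ to leading order. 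This diverges unless $w(t_0)=0$, contradicting differentiability at $t_0$.

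For the ``if'' direction, assume $w(t_0)=0$ and let $(y^\star,w^\star)$ denote the continuously differentiable solution of the Caputo system \eqref{MR_FDEsCaputo} with $(y^\star(t_0),w^\star(t_0))=(y_0,0)$, whose existence is given by \cite{Farazmand2015}. The plan is to show that $(y^\star,w^\star)$ also solves the weak formulation \eqref{MR_integrated_ODEs}, so that uniqueness forces it to coincide with $(y,w)$. Because $w^\star$ is $C^1$ with $w^\star(t_0)=0$, the identity above reduces to $\int_{t_0}^t \dot w^\star(s)/\sqrt{t-s}\,ds = \frac{d}{dt}\int_{t_0}^t w^\star(s)/\sqrt{t-s}\,ds$. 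Integrating the second line of \eqref{MR_FDEsCaputo} from $t_0$ to $t$, applying Fubini to the resulting double integral, and using the fact that $\tau\mapsto\int_{t_0}^\tau w^\star(s)/\sqrt{\tau-s}\,ds$ vanishes at $\tau=t_0$, one recovers exactly \eqref{MR_integrated_ODEs} for $(y^\star,w^\star)$. Proposition \ref{UniquenessProposition} then gives $(y,w)=(y^\star,w^\star)$, so $(y,w)$ is continuously differentiable in time.

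The delicate point in both halves is the boundary regularity at $t_0$. In the ``only if'' step, one cannot a priori differentiate the Basset integral inside \eqref{MR_integrated_ODEs}, which is why the argument extracts a Hölder-type asymptotic from only the continuity of $w$ at $t_0$ and reads off the obstruction from there. In the ``if'' step, the manoeuvre is to establish differentiability indirectly, by constructing a $C^1$ candidate through the Caputo problem where no boundary singularity arises, and transferring the regularity back through uniqueness rather than attempting to differentiate the weak equation directly.
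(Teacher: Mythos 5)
Your argument is correct, and the two directions sit differently relative to the paper. The ``only if'' half is essentially the paper's proof: both extract the term $2w(t_0)\sqrt{t-t_0}$ from the Basset integral via the decomposition $w(s)=w(t_0)+(w(s)-w(t_0))$ and show the remainder is of lower order so that the difference quotient of $w$ at $t_0$ blows up like $(t-t_0)^{-1/2}$; the only difference is that you bound the remainder by $2\sqrt{t-t_0}\sup_{s\in[t_0,t]}|w(s)-w(t_0)|=o(\sqrt{t-t_0})$ using mere continuity, whereas the paper invokes the $1/2$-H\"older estimate of Lemma \ref{w_HolderLemma} to get $O(t-t_0)$ --- your version is slightly more economical since it does not need that lemma for this direction. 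The ``if'' half is genuinely different. The paper proves differentiability of the weak solution directly: at $t_0$ by a squeeze argument on $q(s)=w(s)/\sqrt{s-t_0}$ using Lemma \ref{HolderLimitLemma}, and at $t>t_0$ by writing $w$ as an integral of a bounded density (via the Lipschitz property from Lemma \ref{w_HolderLemma}), integrating by parts, and passing to the limit in the difference quotient by dominated convergence. You instead import the $C^1$ solution of the Caputo system \eqref{MR_FDEsCaputo} from \cite{Farazmand2015}, show by Fubini and one integration by parts (using $w^\star(t_0)=0$) that it satisfies the integrated equation \eqref{MR_integrated_ODEs}, and transfer regularity through Proposition \ref{UniquenessProposition}. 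This neatly sidesteps the issue the paper raises about the identity \eqref{HistoryRLtoCaputo} being meaningless at $t=t_0$, because you only manipulate integrated quantities. What it costs is self-containedness and scope: you lean on the external Caputo existence theorem, and since that result is local in time, your argument delivers differentiability only on the domain of $(y^\star,w^\star)$; the paper's separate dominated-convergence argument establishes differentiability of the global weak solution at every $t>t_0$, which your proposal does not address independently. As the theorem only asserts existence of a strong solution, your local conclusion suffices for the statement as written, but you should flag that the extension of differentiability to the whole half-line requires either a continuation argument for the Caputo solution or an argument of the paper's type away from $t_0$.
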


\begin{proof}
    To prove this theorem, note that the existence of classical solutions is equivalent to the differentiability of the integral \eqref{MRFractionalIntegral} at $t_0$, since the differentiability of the remaining terms in \eqref{MR_integrated_ODEs} is trivial. 
    
    Firstly, assume that $w(t_0)\neq 0$. Then by adding and subtracting $w(t_0)$ to the numerator of the integrand of \eqref{MRFractionalIntegral}, we have
    \begin{align*}
    	\int_{t_0}^t\frac{w(s)}{\sqrt{t-s}}\,ds &= \int_{t_0}^t\frac{w(s)-w(t_0)}{\sqrt{t-s}}\,ds+\int_{t_0}^t\frac{w(t_0)}{\sqrt{t-s}}\,ds \\
    	&= \int_{t_0}^t\frac{w(s)-w(t_0)}{\sqrt{t-s}}\,ds + 2w(t_0)\sqrt{t-t_0}\,.
    \end{align*}
    One may observe that the second term is not differentiable at $t_0$, indeed
    \begin{equation*}
    	\lim_{t\to t_0}\frac{2w(t_0)\sqrt{t-t_0}}{t-t_0} = \lim_{t\to t_0}\frac{2w(t_0)}{\sqrt{t-t_0}} = \infty\,.
    \end{equation*}
    Since, by Lemma \ref{w_HolderLemma}, $w$ is $1/2$-H\"older as $t$ approaches $t_0$, we know that for suitably small $t-t_0$ there exists some $c_2$ such that
    \begin{equation}
        \left|\int_{t_0}^t\frac{w(s)-w(t_0)}{\sqrt{t-s}}\right|\,ds\leq \int_{t_0}^t\frac{\left|w(s)-w(t_0)\right|}{\sqrt{t-s}} \leq c_2 \int_{t_0}^t\frac{\sqrt{s-t_0}}{\sqrt{t-s}}\,ds\,.
    \end{equation}
    We have calculated the integral on the right hand side in the proof of Lemma \ref{HolderLimitLemma} (ii), and thus
    \begin{equation}
        \frac{1}{t-t_0}\left|\int_{t_0}^t\frac{w(s)-w(t_0)}{\sqrt{t-s}}\,ds \right| \leq \frac{c_2\pi}{2}\,.
    \end{equation}
    We may conclude that if $w(t_0)\neq 0$, then the integral \eqref{MRFractionalIntegral} is not differentiable at $t_0$ and thus solutions of \eqref{MR_integrated_ODEs} are not differentiable at $t_0$ and can not be classical solutions. Hence the contrapositive is true, and if solutions of \eqref{MR_integrated_ODEs} are classical solutions, then $w(t_0) = 0$.
    
    We next prove the reverse implication, by assuming that $w(t_0) = 0$. By Lemma \ref{HolderLimitLemma} (i), for $w(t_0)=0$ the following function is bounded
    \begin{equation*}
    	q(s) = \frac{w(s)}{\sqrt{s-t_0}},
    \end{equation*}
    since
    \begin{equation*}
    	\frac{|w(s)|}{\sqrt{s-t_0}} = \frac{|w(s)-w(t_0)|}{\sqrt{s-t_0}} \leq C
    \end{equation*}
    by the definition of H\"older continuity. In this case we have
    \begin{equation}
    	\left|\frac{1}{t-t_0}\int_{t_0}^t\frac{\sqrt{s-t_0}\,q(s)}{\sqrt{t-s}}\,ds \right|\leq \frac{C}{t-t_0}\left| \int_{t_0}^t\frac{\sqrt{s-t_0}}{\sqrt{t-s}}\,ds \right| = \frac{C}{t-t_0}\left| \frac{\pi}{2}(t-t_0) \right| = \frac{C\pi}{2}.
    \end{equation}
    By Lemma \ref{HolderLimitLemma} (ii) the following limit exists
    \begin{equation}\label{ImportantLimit}
    	\lim_{s\to t_0}q(s) = 0,
    \end{equation}
    and therefore for $t$ suitably close to $t_0$ we have
    \begin{equation*}
    	\frac{1}{t-t_0}\int_{t_0}^t\frac{\sqrt{s-t_0}(0-\epsilon)}{\sqrt{t-s}}\,ds \leq \frac{1}{t-t_0}\int_{t_0}^t\frac{\sqrt{s-t_0}\,q(s)}{\sqrt{t-s}}\,ds \leq \frac{1}{t-t_0}\int_{t_0}^t\frac{\sqrt{s-t_0}(0+\epsilon)}{\sqrt{t-s}}\,ds
    \end{equation*}
    and thus
    \begin{equation*}
    	-\frac{C\pi}{2}\epsilon \leq \frac{1}{t-t_0}\int_{t_0}^t\frac{\sqrt{s-t_0}\,q(s)}{\sqrt{t-s}}\,ds \leq \frac{C\pi}{2}\epsilon\,.
    \end{equation*}
    Hence the integral \eqref{MRFractionalIntegral} is differentiable at $t=t_0$, with value zero.
    
    It remains to prove differentiability away from the initial time. By Lemma \ref{w_HolderLemma}, $w(t)$ is Lipschitz and thus absolutely continuous, hence there exists a locally bounded measurable function $a:[t_0,\infty)\mapsto \mathbb{R}$ such that
    \begin{equation*}
        w(s) = w(s) - w(t_0) = \int_{t_0}^s a(r)\,dr\,.
    \end{equation*}
    By integrating by parts, we have the identity
    \begin{equation}
    \begin{aligned}
        0 = \bigg( \sqrt{t-s}\int_{t_0}^s a(r)\,dr \bigg)\bigg|_{t_0}^t &= \int_{t_0}^t \frac{d}{ds}\bigg( \sqrt{t-s}\big( w(t)-w(t_0) \big) \bigg)\,ds \\
        &= \int_{t_0}^t \frac{w(s)-w(t_0)}{2\sqrt{t-s}}\,ds + \int_{t_0}^t a(s)\sqrt{t-s}\,ds \,,
    \end{aligned}
    \end{equation}
    and we thus it suffices to prove the differentiability of
    \begin{equation}
        f(t) = \int_{t_0}^t a(s)\sqrt{t-s}\,ds =  -\int_{t_0}^t \frac{w(s)-w(t_0)}{2\sqrt{t-s}}\,ds = -\int_{t_0}^t \frac{w(s)}{2\sqrt{t-s}}\,ds\,.
    \end{equation}
    Notice that if we prove that $f$ is differentiable for any 
    $t> t_0$, then the Basset history term is differentiable and hence so is $w(t)$. The proof is from first principles: We have that
    \begin{equation}
    \begin{aligned}
        f'(t) &= \lim_{\epsilon\rightarrow 0} \frac{f(t+\epsilon)-f(t)}{\epsilon} = \lim_{\epsilon\rightarrow 0}\left[ \frac{1}{\epsilon}\int_{t_0}^{t+\epsilon} a(s)\sqrt{t+\epsilon-s}\,ds - \frac{1}{\epsilon}\int_{t_0}^t a(s)\sqrt{t-s}\,ds \right] \\
        &= \lim_{\epsilon\rightarrow 0} \left[ \frac{1}{\epsilon}\int_t^{t+\epsilon}a(s)\sqrt{t+\epsilon -s}\,ds \right] + \lim_{\epsilon\rightarrow 0}\int_{t_0}^t a(s)\left[ \frac{\sqrt{t+\epsilon - s} - \sqrt{t-s}}{\epsilon} \right]\,ds\,.
    \end{aligned}\label{atlast}
    \end{equation}
    It remains to prove that the limit can be exchanged with the integral. For $\epsilon>0$, note that
    \begin{equation*}
         \bigg| \frac{1}{\epsilon}\int_t^{t+\epsilon}a(s)\sqrt{t+\epsilon -s}\,ds \bigg| \leq \bigg| \frac{1}{\epsilon}\int_t^{t+\epsilon}a(s)\sqrt{\epsilon}\,ds \bigg| \leq (
         \sup_{s\in [t,t+\epsilon]}|a(s)|)\sqrt{\epsilon}\,,
    \end{equation*}
    the boundedness of $a(s)$ gives 
    \begin{equation*}
        \lim_{\epsilon\rightarrow 0} \left[ \frac{1}{\epsilon}\int_t^{t+\epsilon}a(s)\sqrt{t+\epsilon -s}\,ds \right] = 0\,,
    \end{equation*}
    In the last term in \eqref{atlast} we can switch between the integration and the limit with respect to $\epsilon$ by observing that, for any  $\epsilon >0$, we have 
    \[
     \left |a(s) \frac{\sqrt{t+\epsilon - s} - \sqrt{t-s}}{\epsilon}
     \right |
    =  \left| a(s) \frac{t+\epsilon - s - (t-s)}{\epsilon(\sqrt{t+\epsilon - s} +\sqrt{t-s})}\right|
    \le \frac{\sup_{s\in [t_0,t]} |a(s)| }{2\sqrt{t-s}}
    \]
    and the above upper bound is integrable on the interval $[t_0,t]$. 
    
    For $\epsilon<0$ sufficiently small so that $t+\epsilon>t_0$ one shows in a similar manner that 

    \begin{equation*}
        \lim_{\epsilon\rightarrow 0} \left[ \frac{1}{\epsilon}\int_{t+\epsilon}^ta(s)\sqrt{t-s}\,ds \right] = 0\,,
    \end{equation*}
    Also 
    \[
    \int_{t_0}^{t+\epsilon} a(s)\left[ \frac{\sqrt{t+\epsilon - s} - \sqrt{t-s}}{\epsilon} \right]\,ds=
    \int_{t_0}^{t} a(s)q(s,\epsilon)ds
    \]
    where $q(s,\epsilon)=0$ for $s\in [t+\epsilon, t]$ and 
    \[
    0\le q(s,\epsilon)=  \frac{\sqrt{t+\epsilon - s} - \sqrt{t-s}}{\epsilon}
    =   \frac{t+\epsilon - s - (t-s)}{\epsilon(\sqrt{t+\epsilon - s} +\sqrt{t-s})}
    \le \frac{1 }{\sqrt{t-s}}
    \]
and we have, for any $s\in [t_0,t]$,
    \[
     \left |a(s) \frac{\sqrt{t+\epsilon - s} - \sqrt{t-s}}{\epsilon}
     \right |
\le \frac{\sup_{s\in [t_0,t]} |a(s)| }{\sqrt{t-s}}
    \]
and the above upper bound is integrable on the interval $[t_0,t]$. 
    
Hence we have the required differentiability by the dominated convergence theorem, that is, we have explicitly that
    \begin{equation*}
        f'(t) = \int_{t_0}^t \frac{a(s)}{2\sqrt{t-s}}\,ds\,.
    \end{equation*}
\end{proof}


\section{Properties of the solution as a function of the initial conditions}\label{InitialConditionsSection}

In this section we will work under the following assumptions. 
    \begin{equation}
        \tag{\(**\)}\label{assumptions2}
        \parbox{\dimexpr\linewidth-4em}{%
        \strut
        The velocity field, $u$, is four times continuously differentiable and its partial derivatives are Lipschitz continuous up to order four.%
        \strut}
    \end{equation}

\subsection{Behaviour of neighbouring inertial particles}

\begin{proposition}
Under the conditions \eqref{assumptions2}, the distance between two trajectories at any time $t$ is controlled by the difference between their initial conditions. Indeed, for two inital conditions $x_1,x_2 \in \mathbb{R}^{2n}$, there exists some constant $M$ such that
\begin{equation*}
    \begin{aligned}
        \left\| y(t,x_2) - y(t,x_1) \right\| &\leq M\|x_2-x_1\|\,, \\
        \left\| w(t,x_2) - w(t,x_1) \right\| &\leq M\|x_2-x_1\|\,,
    \end{aligned}
\end{equation*}
where the notation $y(t,x_i)$ and $w(t,x_i)$ is used to reflect the dependence of the solution on its initial conditions.
\end{proposition}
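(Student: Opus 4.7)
The plan is to subtract the two integrated Maxey--Riley equations \eqref{MR_integrated_ODEs} corresponding to initial data $x_1=(y_0^1,w_0^1)$ and $x_2=(y_0^2,w_0^2)$, estimate the resulting differences termwise, and then close the argument with the fractional Gr\"onwall inequality from Appendix \ref{GronwallAppendix} (the same tool already used in Lemma \ref{GronwallLemma}). Writing $\Delta y(t)\coloneqq y(t,x_2)-y(t,x_1)$ and $\Delta w(t)\coloneqq w(t,x_2)-w(t,x_1)$, the integrated system gives
\begin{align*}
\Delta y(t) &= (y_0^2-y_0^1) + \int_{t_0}^t \Delta w(s)\,ds + \int_{t_0}^t \bigl(A_u(y(s,x_2),s)-A_u(y(s,x_1),s)\bigr)\,ds,\\
\Delta w(t) &= (w_0^2-w_0^1) - \int_{t_0}^t \mu\,\Delta w(s)\,ds - \kappa\mu^{1/2}\int_{t_0}^t \frac{\Delta w(s)}{\sqrt{t-s}}\,ds\\
&\quad - \int_{t_0}^t \bigl(M_u(y(s,x_2),s)w(s,x_2)-M_u(y(s,x_1),s)w(s,x_1)\bigr)\,ds\\
&\quad + \int_{t_0}^t \bigl(B_u(y(s,x_2),s)-B_u(y(s,x_1),s)\bigr)\,ds.
\end{align*}

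The next step is to bound each difference by $|\Delta y(s)|$ and $|\Delta w(s)|$. Assumption \eqref{assumptions2} strengthens \eqref{assumptions} enough that $A_u$, $B_u$, $M_u$, and $\nabla M_u$ are uniformly bounded and Lipschitz in space on the time interval of interest, so the $A_u$- and $B_u$-differences are controlled by $L_c|\Delta y|$. For the nonlinear term I would split
\[
M_u(y(s,x_2),s)w(s,x_2) - M_u(y(s,x_1),s)w(s,x_1) = \bigl(M_u(y(s,x_2),s)-M_u(y(s,x_1),s)\bigr)w(s,x_2) + M_u(y(s,x_1),s)\,\Delta w(s),
\]
and use the global a priori bound $\sup_{s\in[t_0,T]}|w(s,x_2)|\le C_W$ from Lemma \ref{GronwallLemma} together with the Lipschitz bound on $M_u$ to get a pointwise estimate of the form $C\,(|\Delta y(s)|+|\Delta w(s)|)$.

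Setting $\phi(t)\coloneqq|\Delta y(t)|+|\Delta w(t)|$ and adding the two resulting inequalities, the terms regroup into
\[
\phi(t) \le \|x_2-x_1\| + C_1\int_{t_0}^t \phi(s)\,ds + C_2\int_{t_0}^t \frac{\phi(s)}{\sqrt{t-s}}\,ds,
\]
for constants $C_1,C_2$ depending only on $T$, $\mu$, $\kappa$, $L_b$, $L_c$ and the a priori bound $C_W$. This is exactly the setting of the fractional Gr\"onwall lemma of Appendix \ref{GronwallAppendix}, whose conclusion yields $\phi(t)\le M\|x_2-x_1\|$ on any fixed compact interval $[t_0,T]$, with $M$ depending on $T$ but independent of $x_1,x_2$. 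Since $|\Delta y(t)|$ and $|\Delta w(t)|$ are each dominated by $\phi(t)$, both stated inequalities follow with the same constant.

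The main obstacle, as usual for this equation, is the singular Basset kernel $1/\sqrt{t-s}$: a naive application of the classical Gr\"onwall inequality fails because the integral term is not $L^\infty$-Lipschitz in the driving variable. The fractional Gr\"onwall bound from Appendix \ref{GronwallAppendix} is what absorbs this singularity, and the only new input compared with Lemma \ref{GronwallLemma} is the splitting of the $M_u w$ term above, which requires the already-established global boundedness of $w$. Everything else is a routine estimation using the Lipschitz properties guaranteed by \eqref{assumptions2}.
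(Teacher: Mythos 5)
Your argument is correct, but it follows a genuinely different route from the paper. The paper proves this proposition by introducing the variational (derivative-with-respect-to-initial-conditions) system \eqref{MR_derivatives} for $Dy$ and $Dw$, bounding that derivative via the fractional Gr\"onwall lemma, and then recovering the Lipschitz estimate by integrating $Dy,Dw$ along a segment from $x_1$ to $x_2$. You instead subtract the two integrated equations directly and run the Gr\"onwall argument on $\phi=|\Delta y|+|\Delta w|$, which is essentially the paper's own proof of the uniqueness statement (Proposition \ref{UniquenessProposition}) with the inhomogeneity $a(t)=\|x_2-x_1\|$ in place of $a(t)=0$. Your route is more elementary and self-contained: it sidesteps the need to justify that the flow is differentiable in the initial data and that \eqref{MR_derivatives} has global solutions, points the paper only sketches. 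What the paper's approach buys in exchange is the object $D\varphi$ itself, which is reused immediately afterwards (via its matrix inverse) to prove the non-collision result, so the extra machinery is not wasted. Two small caveats on your version: the splitting of the $M_uw$ term needs $M_u$ Lipschitz in space and the a priori bound $C_W$ on $w(\cdot,x_2)$, both of which depend on the initial data, so your closing remark that $M$ is ``independent of $x_1,x_2$'' is slightly too strong --- $M$ inherits a dependence on the initial conditions through $C_W$ (as does the paper's constant), which is harmless for the statement as phrased; and you should note that $\phi$ is continuous and nonnegative so that Theorem \ref{LinGronwallLemma} applies with the constant functions $b_1=C_1$, $b_2=C_2$.
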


\begin{remark}
    This result implies that one can ensure that two trajectories are arbitrarily close at time $t$ by selecting suitably close initial conditions for them.
\end{remark}

\begin{proof}
Suppose $(y(t),w(t))$ denotes a solution of \eqref{MR_ODEs} corresponding to an initial condition $(y_0,w_0)\in\mathbb{R}^{2n}$. We denote the derivatives of $y$ and $w$ with respect to $(y_0,w_0)$ by $Dy$ and $Dw$ respectively. Note that these derivatives are matrix valued and may be considered as a map $[t_0,\infty)\to \mathbb{R}^{n\times2n}$. As in \cite{Farazmand2015}, these derivatives satisfy the equation
\begin{equation}\label{MR_derivatives}
    \begin{aligned}
        Dy(t) &= (I_n|O_n) + \int_{t_0}^t \Big( Dw(s) + \nabla A_u(y(s),s)Dy(s) \Big) \,ds, \\
        Dw(t) &= (O_n|I_n) + \int_{t_0}^t \bigg( -\mu Dw(s) - \mathcal{L}(y(s),w(s),s)Dy(s) - M_u(y(s),s)Dw(s) \\\
        &\qquad\qquad\qquad\qquad\qquad -\kappa\mu^{1/2}\frac{Dw(s)}{\sqrt{t-s}} + \nabla B_u(y(s),s)Dy(s) \bigg)\,ds,
    \end{aligned}
\end{equation}
where $\mathcal{L}$ is an $n$-dimensional square matrix with components defined by
\begin{equation*}
    \mathcal{L}_{ij}(y(s),w(s),s) = \sum_k \frac{\partial M_{ik}}{\partial y_j}(y(s),s)w_k(s)\,,
\end{equation*}
and $I_n$ and $O_n$ denote the $n$-dimensional identity and null matrices respectively.


The equation \eqref{MR_derivatives} has solutions under assumptions \eqref{assumptions2}.
With a similar methodology to that developed in section \ref{GLOBALWELLPOSEDNESSSECTION}, this result may be shown to hold globally in time. Under assumptions \eqref{assumptions2}, we have that $\nabla A_u$, $\mathcal{L}$, $M_u$, and $B_u$ are all bounded and we thus have sufficient conditions to apply a fractional Gr\"onwall argument as in Appendix \ref{GronwallAppendix}. Let
\begin{equation}
    f(t) = \sup_{s\in[t_0,t]}\sum_{i=1}^n |Dy^i(s)| + |Dw^i(s)|\,,
\end{equation}
then there exist constants $C_1,C_2$ such that
\begin{equation}
    f(t) \leq 2n + \int_{t_0}^t C_1f(s)\,ds + \int_{t_0}^t C_2 (t-s)^{-1/2}f(s)\,ds\,.
\end{equation}
We may once again apply the Gr\"onwall inequality from Appendix \ref{GronwallAppendix}. Hence $Dy$ and $Dw$ are bounded above on intervals $[t_0,T)$ for all $T>t_0$. Suppose $M$ is such that $Dy$ and $Dw$ are bounded above by $M$. Then, for two initial conditions $x_1,x_2 \in \mathbb{R}^{2n}$ we have
\begin{equation*}
    \begin{aligned}
        \left\| y(t,x_2) - y(t,x_1) \right\| &= \int_{x_1}^{x_2} Dy(t,z)\,dz \leq M\|x_2-x_1\|\,, \\
        \left\| w(t,x_2) - w(t,x_1) \right\| &= \int_{x_1}^{x_2} Dy(t,z)\,dz \leq M\|x_2-x_1\|\,.
    \end{aligned}
\end{equation*}
\end{proof}

\subsection{Non-collision of inertial particles}

\begin{proposition}
    Under conditions \eqref{assumptions2}, the distance between two trajectories is always strictly positive if their initial conditions are distinct.
\end{proposition}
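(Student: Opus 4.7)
The strategy is to reduce non-collision to invertibility of the fundamental matrix of a linear integro-differential equation governing the difference of two solutions, and then to invert this matrix via an adjoint-type construction. For distinct initial conditions $x_1\neq x_2$, let $X(t,x_i) := (y(t,x_i), w(t,x_i))$ denote the two solutions and set $\delta X(t) := X(t,x_2) - X(t,x_1)$. Using Fundamental Theorem of Calculus representations such as
\begin{equation*}
A_u(y_2(s),s) - A_u(y_1(s),s) = \tilde A(s)(y_2(s)-y_1(s)), \qquad \tilde A(s) := \int_0^1 \nabla A_u\bigl(y_1(s) + \theta(y_2(s)-y_1(s)), s\bigr)\,d\theta,
\end{equation*}
and handling the bilinear term $M_u(y)w$ analogously, one sees that $\delta X$ satisfies a linear integro-differential equation of exactly the same structural form as the variational equation \eqref{MR_derivatives}, with time-dependent coefficient matrices that are bounded on compact time intervals (where \eqref{assumptions2} is needed to ensure local boundedness and Lipschitz control of the derivatives of $u$). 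Let $\Psi(t)$ denote the fundamental $2n\times 2n$ matrix of this linear equation, i.e. the solution of the same equation with $\Psi(t_0) = I_{2n}$; by linearity one has $\delta X(t) = \Psi(t)\delta X(t_0)$.

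The proof then reduces to showing $\Psi(t)$ is invertible for every $t>t_0$. I would derive a candidate adjoint equation for $K(t) := \Psi(t)^{-1}$ by formally imposing that $(d/dt)(\Psi(t)K(t)) \equiv 0$. Because of the Basset memory kernel $(t-s)^{-1/2}$, the naive ODE identity $\dot K = -K\dot\Psi K$ is not sufficient; one must include a compensating memory term in the equation for $K$ arranged so that the two memory contributions in the derivative of $\Psi K$ cancel pointwise in $t$. Once this candidate equation for $K$ is in hand, its global well-posedness and uniform boundedness on compact time intervals follow by precisely the fractional Gr\"onwall argument from Appendix \ref{GronwallAppendix} used to bound $\Psi$ itself, since the equation for $K$ has the same structural form (bounded linear coefficients plus a $(t-s)^{-1/2}$ convolution). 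Verifying $\Psi(t)K(t) = I_{2n}$ by checking that its time derivative vanishes and that the identity holds at $t_0$ then completes the inversion. The identity $\delta X(t) = \Psi(t)\delta X(t_0)$, combined with $\Psi(t)$ invertible and $\delta X(t_0) \neq 0$, immediately forces $\delta X(t) \neq 0$ for every $t \geq t_0$, which is the desired non-collision statement.

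The main obstacle will be engineering the correct adjoint memory term in the equation for $K$ so that the two memory contributions in the differentiation of $\Psi K$ actually cancel. Since $(t-s)^{-1/2}$ does not enjoy the clean semigroup algebra of an exponential kernel, this cancellation is delicate and will likely need to be carried out by hand, possibly using the Riemann-Liouville / Caputo identities recalled in Section \ref{FractionalSection}. A minor secondary point is to confirm that the averaged coefficients $\tilde A$, and the corresponding expressions arising from $B_u$ and $M_u w$, are bounded uniformly along the segment between the two solutions on compact time intervals, which is precisely where the stronger regularity hypothesis \eqref{assumptions2} (Lipschitz derivatives up to order four) is invoked rather than the weaker \eqref{assumptions}.
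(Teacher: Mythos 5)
Your proposal follows essentially the same route as the paper: both reduce non-collision to invertibility of the fundamental matrix of a linear memory equation, construct the inverse via an adjoint equation in which the Basset kernel appears time-reversed (a right Riemann--Liouville derivative, so the two memory contributions cancel --- the cancellation you flag as the main obstacle is exactly the fractional integration-by-parts identity of Appendix \ref{MatrixInverseAppendix}), and bound the inverse on compact time intervals with the fractional Gr\"onwall lemma of Appendix \ref{GronwallAppendix}. The only cosmetic difference is that you linearize the difference of the two solutions along the connecting segment and write $\delta X(t)=\Psi(t)\delta X(t_0)$, whereas the paper works with the derivative $D\varphi$ with respect to the initial conditions and deduces a positive lower bound on $Dy$, $Dw$ from the bound on $D\varphi^{-1}$.
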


\begin{proof}
Define the matrix $D\varphi$ by 
\begin{equation*}
    D\varphi(t) \coloneqq \begin{pmatrix} Dy(t) \\ Dw(t) \end{pmatrix}\,
\end{equation*}
then this matrix evolves according to the equation
\begin{equation}
\begin{aligned}
    D\varphi(t) = I_{2n} &- \int_{t_0}^t \begin{pmatrix} O_n & O_n \\ O_n & -\kappa\mu^{1/2}I_n \end{pmatrix}D\varphi(s)\,\frac{ds}{\sqrt{t-s}} \\
    &+ \int_{t_0}^t \begin{pmatrix} \nabla A_u(y(s),s)I_n & I_n \\ \big( \nabla B_u(y(s),s) - \mathcal{L}(y(s),w(s),s) \big)I_n & -\big(\mu + M_u(y(s),s)\big)I_n\end{pmatrix} D\varphi(s)\,ds\,.
\end{aligned}
\end{equation}
Immediately following Appendix \ref{MatrixInverseAppendix}, the inverse of $D\varphi(t)$ evolves according to
\begin{equation}
\begin{aligned}
    D\varphi^{-1}(s) = &I_{2n} -\int_s^t D\varphi^{-1}(r)\begin{pmatrix} O_n & O_n \\ O_n & -\kappa\mu^{1/2}I_n \end{pmatrix}\frac{dr}{\sqrt{r-s}}  \\
    &- \int_{t_0}^s D\varphi^{-1}(r)\begin{pmatrix} \nabla A_u(y(r),r)I_n & I_n \\ \big( \nabla B_u(y(r),r) - \mathcal{L}(y(r),w(r),r) \big)I_n & -\big(\mu + M_u(y(r),r)\big)I_n\end{pmatrix}\,dr\,.
\end{aligned}
\end{equation}
Since left and right Riemann-Liouville derivatives are equivalent up to time reversal, the same Gronwall argument from Appendix \ref{GronwallAppendix} may be applied. This is valid since $u$ is sufficiently smooth and bounded under the assumptions \eqref{assumptions2} for the coefficients to satisfy the required conditions for the Gronwall theorem to hold. Hence there exists some $\widetilde M$ which is an upper bound for $Dy^{-1}$ and $Dw^{-1}$, and hence
\begin{equation}
    0 < \frac{1}{\widetilde M} \leq Dy, Dw \,.
\end{equation}
\end{proof}

\subsection{Time reversability property}

We have so-far shown that the solution of the Maxey-Riley equation was shown to be injective as a map from the initial conditions to the solution at time $t$. In order to prove that this map is surjective we need to show that, for any point $(\widetilde y,\widetilde w)\in\mathbb{R}^{2n}$, there exists some solution $(y(t),w(t))$ corresponding to an intial condition $(y_0,w_0)$ and some time $T$ such that $(y(T),w(T)) = (\widetilde y, \widetilde w)$. This is related to the idea of time reversal.

Our existence and uniqueness results for the Maxey-Riley equation apply \emph{forwards} in time. Thus, if we know the value of our solution at a particular time $t_0$, our existence and uniqueness results only provide us information about the problem beyond this time, and not before it. To extend on this, we introduce the concept of a \emph{right} fractional derivative defined as follows.
\begin{definition}\label{RightRiemannLiouvilleDefinition}
	For a real number $p\in\mathbb{R}$, define the integer $n\in\mathbb{Z}$ to be such that $n-1\leq p < n$. We may then define the \emph{right Riemann-Liouville fractional derivative of order $p$} by
	\begin{equation}\label{RightRiemannLiouvilleDefinitionEquation}
		D^p_bf(t) = \frac{1}{\Gamma(n-p)}\left(-\frac{d}{dt}\right)^n\int_t^b(s-t)^{n-p-1}f(s)\,ds.
	\end{equation}
\end{definition}
Notice now that this derivative is equivalent to the \emph{left} derivative under time reversal. Indeed, where $s=-\sigma$ and $t=-\tau$, we have
\begin{align*}
    _aD^pf(t) &= \frac{1}{\Gamma(n-p)}\left(\frac{d}{dt}\right)^n\int_a^t(t-s)^{n-p-1}f(s)\,ds \,,\\
    &= \frac{1}{\Gamma(n-p)}\left(-\frac{d}{d\tau}\right)^n\int_{-a}^{\tau}(-\tau+\sigma)^{n-p-1}f(-\sigma)\,(-1)d\sigma \,,\\
    &= \frac{1}{\Gamma(n-p)}\left(-\frac{d}{d\tau}\right)^n\int_{\tau}^{-a}(\sigma-\tau)^{n-p-1}f(-\sigma)\,d\sigma = D_{-a}^p f(-\tau) = D_{-a}^p f(t)\,.
\end{align*}

Thus, by relabeling the time variable to ensure that memory is accumulated starting at time $t=0$ (i.e. $t_0 = 0$), the `backwards' equation for the Maxey-Riley equation has a similar form as the forwards equation, with the Basset history term being a \emph{right} Riemann-Liouville fractional derivative rather than a left one. By existence and uniqueness arguments similar to those for the forward equation, the backwards equation has a unique solution and hence time reversal is possible within this model.

\section*{Acknowledgements}
During this work, O. D. Street has been supported by an EPSRC studentship as part of the Centre for Doctoral Training in the Mathematics of Planet Earth (grant number EP/L016613/1), and D. Crisan has been partially supported by European Research Council (ERC) Synergy grant STUOD - DLV-856408.

We would like to thank our friends and colleagues for generously sharing their thoughts and ideas with us. In particular our thanks go to Darryl D. Holm and Bertrand Chapron, for their insightful comments.

\begin{appendices}
	\section{Gr\"onwall result for fractional differential equations}\label{GronwallAppendix}
	The following version of Gr\"onwall's lemma may be applied to a broad class of fractional differential equations, it may be found as Theorem 1.4 in \cite{Lin2013a}.
	\begin{theorem}\label{LinGronwallLemma}
		If, for any $t\in[0,T)$, we have
		\begin{equation}\label{GronwallAssumption}
		u(t)\leq a(t) + \sum_{i=1}^n b_i(t)\int_0^t(t-s)^{\beta_i-1}u(s)\,ds,
		\end{equation}
		where all the functions are nonnegative and continuous, the constants $\beta_i$ are positive, and $b_i\quad(i=1,2,\dots,n)$ are the bounded and monotonic increasing functions on $[0,T]$. Then, for any $t\in [0,T)$, we have 
		\begin{equation}\label{GronwallConclusion}
		u(t)\leq \sup_{t\in[0,T]}\left(a(t) + \sum_{k=1}^\infty \left(\sum_{1',2',,\dots,k'=1}^n\frac{\prod_{i=1}^{k}[b_{i'}(t)\Gamma(\beta_{i'})]}{\Gamma(\sum_{i=1}^k\beta_{i'})}\int_0^t(t-s)^{\sum_{i=1}^k\beta_{i'} -1}a(s)\,ds \right)\right)<\infty.
		\end{equation}
	\end{theorem}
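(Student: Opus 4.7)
The plan is to treat the hypothesis as a linear fixed-point-type inequality and iterate it in the style of Picard, using the classical Beta function identity
\[
\int_s^t (t-r)^{\alpha-1}(r-s)^{\gamma-1}\,dr = \frac{\Gamma(\alpha)\Gamma(\gamma)}{\Gamma(\alpha+\gamma)}(t-s)^{\alpha+\gamma-1}
\]
to collapse nested fractional-integral kernels into a single weakly singular kernel of the summed order. Concretely, define the linear positive operator
\[
(Lv)(t) = \sum_{i=1}^n b_i(t)\int_0^t (t-s)^{\beta_i-1} v(s)\,ds,
\]
so that the hypothesis \eqref{GronwallAssumption} reads $u \le a + Lu$, with $L$ monotone on nonnegative functions.

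First I would iterate the inequality $N$ times, using monotonicity of $L$, to obtain $u(t) \le \sum_{k=0}^{N-1} (L^k a)(t) + (L^N u)(t)$, where $L^0 a := a$. The crux is to compute $L^k a$ explicitly. Expanding $L^k$ produces an $n^k$-fold sum of $k$-fold iterated integrals. On each iterated integral I would apply Fubini--Tonelli to swap the order of integration, use the monotonicity of the $b_i$ (so that $b_{i'}(r) \le b_{i'}(t)$ for $r \le t$, allowing the factors $b_{i'}$ to be pulled outside with the value $b_{i'}(t)$), and invoke the Beta identity successively. An induction on $k$ then yields
\[
(L^k a)(t) \le \sum_{1',\ldots,k'=1}^{n} \frac{\prod_{i=1}^k \bigl[b_{i'}(t)\Gamma(\beta_{i'})\bigr]}{\Gamma\!\bigl(\sum_{i=1}^k \beta_{i'}\bigr)} \int_0^t (t-s)^{\sum_{i=1}^k \beta_{i'} - 1} a(s)\,ds,
\]
which matches the $k$-th summand in the conclusion \eqref{GronwallConclusion}.

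The final step is to show that the remainder $(L^N u)(t) \to 0$ as $N \to \infty$. Since $u$ is continuous on $[0,T]$ it is bounded there by some $U_\infty$; the same inductive computation with $a$ replaced by the constant $U_\infty$ produces a majorant of $L^N u$ of the form
\[
(L^N u)(t) \le U_\infty \sum_{1',\ldots,N'=1}^{n} \frac{\prod_{i=1}^N \bigl[B\,\Gamma(\beta_{i'})\bigr]}{\Gamma\!\bigl(1+\sum_{i=1}^N \beta_{i'}\bigr)}\, t^{\sum_{i=1}^N \beta_{i'}},
\]
where $B = \max_i \sup_{[0,T]} b_i$. Setting $\beta_{\min} = \min_i \beta_i$, each term is dominated by $C^N / \Gamma(1+N\beta_{\min})$ for some $C$ depending only on $T, B$ and $\max_i \Gamma(\beta_i)$, and there are $n^N$ terms, giving a bound $(nC)^N / \Gamma(1+N\beta_{\min})$. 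Because $\Gamma$ grows super-exponentially, this tends to zero uniformly in $t \in [0,T]$, and the same estimate shows that the infinite series in \eqref{GronwallConclusion} converges to a continuous and finite function.

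The main obstacle is the combinatorial bookkeeping during the inductive step: carefully accounting for the $n^k$-fold index set when composing $L$, justifying the exchange of sum and integral via Fubini--Tonelli on each iteration, and verifying that the Beta identity reduces each $k$-fold iterated kernel to exactly $\Gamma$-coefficients appearing in the statement. A secondary subtle point is ensuring that the monotonicity hypothesis on $b_i$ is used in the correct direction when pulling these factors out of the nested integrals, so that the resulting inequality still bounds $u$ from above rather than below.
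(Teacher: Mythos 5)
The paper does not actually prove this theorem: it is quoted verbatim from Theorem 1.4 of the cited reference \cite{Lin2013a} and used as a black box, and the only argument the paper supplies is the remark immediately following the statement, which establishes convergence of the infinite series in \eqref{GronwallConclusion} for bounded $a$ and $b_i$. Your Picard-iteration argument --- iterating the monotone positive operator $L$ to get $u\le\sum_{k<N}L^ka+L^Nu$, pulling the monotone increasing factors $b_{i'}$ out at their value at $t$, collapsing the nested kernels with the Beta identity to produce exactly the $\Gamma$-coefficients in \eqref{GronwallConclusion}, and killing the remainder via the super-exponential growth of $\Gamma\bigl(1+N\beta_{\min}\bigr)$ --- is the standard proof of this class of fractional Gr\"onwall inequalities and is essentially the proof in the cited source, so it is correct and in fact supplies what the paper omits. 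Your tail estimate overlaps with the paper's convergence remark; the one subtlety to make explicit (which the paper's remark does flag) is that the comparison $\Gamma\bigl(1+\sum_{i=1}^N\beta_{i'}\bigr)\ge\Gamma\bigl(1+N\beta_{\min}\bigr)$ relies on $\Gamma$ being increasing, which fails near the origin and so is only valid once $N\beta_{\min}$ is large enough; this costs nothing since you only need the remainder and the series tail to vanish for large $N$. A second small wrinkle, inherited from the statement itself rather than introduced by you: the hypothesis is posed on $[0,T)$ while the conclusion takes $\sup_{t\in[0,T]}$, so your appeal to boundedness of $u$ on the closed interval should be read as working on $[0,T']$ for each $T'<T$, or under the tacit assumption that the functions extend continuously to $T$.
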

	
	\begin{remark}
	    For bounded $b_i$ and $a$, the infinite sum in \eqref{GronwallConclusion} converges. To show this, we assume that $a(t)\leq A$ and $b_i(t)\leq B$ for all $i=1,\dots,n$, furthermore we may assume without loss of generality that $\beta_i$ are ordered $\beta_1\leq\beta_2\leq \dots \leq \beta_n$. We label the terms of this series $a_k$, and for $k>2/\beta_1$ we have
	    \begin{align*}
	        a_k &\leq A\sum_{1',2',,\dots,k'=1}^n \frac{B^k(\max_i\Gamma(\beta_i))^k}{\Gamma(\sum_i^k\beta_{i'})}\frac{1}{k\beta_1} \max\{t,1\}^{k\beta_n} \\
	        &\leq Ank \frac{x^k}{\Gamma(k\beta_1)k\beta_1} \qquad\text{where }x\coloneqq B\max\{T,1\}^{\beta_n}\max_i\Gamma(\beta_i) \\
	        &\leq A'\frac{x^k}{\Gamma(k\beta_1)},
	    \end{align*}
	    for a constant $A'$. Note that the inequality in the second line above is only true for $k>2/\beta_1$ since the gamma function is increasing on the interval $[2,\infty)$ however is decreasing nearer to 0. We split $k$ into the following subsets $S_m\coloneqq\{k : m\leq k\beta_1 \leq m+1\}$, and notice that on $S_m$ we have $\Gamma(k\beta_1) > (m-1)!$. Thus we may bound $a_k$ by the terms of the following series
	    \begin{equation*}
	        \sum_{m=2}^\infty \sum_{k\in S_m}A'\frac{x^k}{(m-1)!}.
	    \end{equation*}
	    We have
	    \begin{equation*}
	        \sum_{k\in S_m}x^k \leq \frac{m+1}{\beta_1}y^{\frac{m+1}{\beta_1}},\qquad \text{where }y\coloneqq \max\{x,1\},
	    \end{equation*}
	    and the sum defined by
	    \begin{equation*}
	        \sum_{m=2}^\infty A'\frac{m+1}{\beta_1}\frac{y^{\frac{m+1}{\beta_1}}}{(m-1)!}
	    \end{equation*}
	    obviously converges. Hence the infinite sum in \eqref{GronwallConclusion} converges as claimed.
	\end{remark}

    \section{Lemmata for the map \emph{P}}\label{AppendixFarazmandLemmas}

    Proofs of the following lemmata, which are analogous to lemmata 1 and 2 from \cite{Farazmand2015}, are required to complete the proof of Theorem \ref{CompactSetTheorem}. We define the following space of functions
    \begin{equation*}
        X_{\delta,K} \coloneqq \{f\in C([t_1,t_1+\delta);\mathbb{R}^m):\|f\|\leq K \}
    \end{equation*}
    where $m$ can be either $n$ or $2n$ as required. Note that in the following we will be dealing with the map $P$ defined by \eqref{Pmap}, in the context of which $\Phi$ is $2n$-dimensional and $\eta,\xi$ are $n$-dimensional.
    \begin{lemma}\label{NewFarazmandLemma1}
        For $P$ as defined by equation \eqref{Pmap}, there exists a $K>0$ large enough and $\delta=\delta(K)>0$ small enough and independent of the initial condition such that $P$ maps functions from 
        $X_{\delta,K}$ to $X_{\delta,K}$.
    \end{lemma}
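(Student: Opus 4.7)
My plan is to verify both defining properties of $X_{\delta,K}$ for $P\Phi$: continuity on $[t_1,t_1+\delta)$ and the pointwise bound $\|P\Phi\|\leq K$. Continuity of each component of $P\Phi$ is essentially routine, since the only singular integrand is the kernel $(t-s)^{-1/2}$ at $s=t$ in $(P\Phi)_2$ --- and this is integrable with $\int_{t_1}^t(t-s)^{-1/2}\,ds=2\sqrt{t-t_1}$ --- while the external memory kernel on $[t_0,t_1]$ depends smoothly on $t$ for $t>t_1$. The substantive work is therefore the pointwise bound.

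For $(P\Phi)_1$, the triangle inequality together with $|y_{t_1}|\leq R$, $|\eta(s)|\leq K$, and the Lipschitz-in-space plus bounded-time-derivative control on $A_u$ (giving $|A_u(\xi(s),s)|\leq L_c K + C_A$ for a constant $C_A$ depending on $|A_u(0,t_0)|$ and $L_b(T-t_0)$) yields $|(P\Phi)_1(t)|\leq R + K\delta + \delta(L_cK+C_A)$. For $(P\Phi)_2$ I would proceed analogously, using that $M_u=\nabla A_u$ is uniformly bounded by $L_b$ under \eqref{assumptions} and that the local singular kernel contributes at most $2\kappa\mu^{1/2}K\sqrt{\delta}$. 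The remaining piece is the external memory correction, which I would handle via the explicit identity
\[
\int_{t_0}^{t_1}\Bigl[(t_1-s)^{-1/2}-(t-s)^{-1/2}\Bigr]\,ds = 2\sqrt{t_1-t_0}-2\sqrt{t-t_0}+2\sqrt{t-t_1},
\]
whose modulus is at most $4\sqrt{\delta}$ by the elementary inequality $|\sqrt{a}-\sqrt{b}|\leq\sqrt{|a-b|}$.

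The chief obstacle is securing a bound on $\|w\|_\infty$ over $[t_0,t_1]$ that is independent of $t_1$: naive continuity on a compact interval would give a bound that could deteriorate as $t_1\to T$, which would corrupt the claim that $\delta$ depends only on $S$. To ensure that the resulting $\delta$ is $t_1$-independent, I would invoke Lemma \ref{GronwallLemma}, which furnishes a bound $|w|\leq C_W$ on $[t_0,T)$ depending only on $T,y_0,w_0,\kappa,\mu$ and $L_b$; the external memory term is then controlled by $4\kappa\mu^{1/2}C_W\sqrt{\delta}$, uniformly in $t_1$. Assembling these estimates, I would first choose $K$ large enough (of the order indicated in the proof of Theorem \ref{CompactSetTheorem}) to absorb $R$ together with all the constants that do not shrink with $\delta$, and then $\delta=\delta(K)$ small enough that every $\delta$- or $\sqrt{\delta}$-dependent contribution is at most $K/2$; since all thresholds then depend only on $R$, $T-t_0$, the fixed problem parameters and $C_W$, the required $t_1$-independence follows.
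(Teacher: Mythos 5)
Your proof is correct and follows the same overall strategy as the paper's: establish continuity of $P\Phi$, then verify $\|P\Phi\|_\infty\leq K$ term by term, choosing $K$ first to absorb the constants that do not shrink with $\delta$ and then $\delta=\delta(K)$ to suppress the rest. Two of your choices differ in ways worth recording. First, for the external memory correction $\int_{t_0}^{t_1}w(s)\bigl[(t_1-s)^{-1/2}-(t-s)^{-1/2}\bigr]\,ds$ you integrate the kernel explicitly and obtain a bound of order $\sqrt{\delta}$, whereas the paper combines the two kernels over a common denominator and settles for the $\delta$-independent bound $2R\sqrt{T-t_0}$, which it then absorbs into the choice $K=4\max\{R,2R\sqrt{T-t_0}\}$; your estimate is sharper and turns this term into one of the contributions that vanishes with $\delta$ rather than a constraint on $K$. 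Second, and more substantively, you source the uniform bound on $\|w\|_\infty$ over $[t_0,t_1]$ from Lemma \ref{GronwallLemma}, while the paper takes $|w(s)|\leq R$ directly from the inclusion of the solution in the compact set $S\subseteq\bar B_0(R)$. Your route is non-circular (Lemma \ref{GronwallLemma} rests only on the fractional Gr\"onwall inequality of Appendix \ref{GronwallAppendix}) and is arguably the more careful one: as formalized in the proof of Theorem \ref{CompactSetTheorem}, the contradiction hypothesis only guarantees that the solution returns to $S$ along a sequence $t_n\to T$, not that it remains in $S$ on all of $[t_0,t_1]$, so the pointwise bound $|w(s)|\leq R$ on the whole history interval is precisely the kind of gap your appeal to the Gr\"onwall constant $C_W$ closes. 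The price is that your $\delta$ depends on $C_W$ (hence on $T$, $y_0$, $w_0$) rather than only on $R$ and $T-t_0$, but it remains independent of $t_1$ and of the restarted initial condition $(y_{t_1},w_{t_1})$, which is all the application requires. One point you pass over lightly is the continuity of $t\mapsto\int_{t_1}^t\eta(s)(t-s)^{-1/2}\,ds$, where $t$ enters both as the upper limit and inside the kernel; integrability of the kernel alone does not settle this, and the paper invokes the H\"older argument of Lemma \ref{HolderLimitLemma}(i) here, but your quantitative $\sqrt{\delta}$-type estimates supply the same argument in substance.
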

    \begin{proof}
        We must first prove that $P\Phi$ is continuous for continuous $\Phi$, given assumption \eqref{assumptions}.
        This continuity is obvious with the exception of the continuity of the integral
        \begin{equation}\label{ContinuityIntegral1}
            \int_{t_1}^t\frac{\eta(s)}{\sqrt{t-s}}\,ds, 
        \end{equation}
        for $\eta\in X_{\delta,K}$, as well as the integral
        \begin{equation}\label{ContinuityIntegral2}
            \int_{t_0}^{t_1}\frac{w(s)}{\sqrt{t-s}}\,ds,
        \end{equation}
        for $w\in X_{\delta,K}$. Following same argument as in the proof of Lemma \ref{HolderLimitLemma} (i), we may see that \eqref{ContinuityIntegral1} is continuous. It remains only to prove that \eqref{ContinuityIntegral2} has the required continuity. Recalling that $R$ is such that $w\in S\subseteq \bar B_0 (R)$, we have that \eqref{ContinuityIntegral2} is continuous at $\tau$ since for all $\varepsilon > 0$, if we have $|t-\tau|<\frac{\varepsilon^2}{16R^2}$, then
        \begin{align*}
        \left| \int_{t_0}^{t_1}\frac{w(s)}{\sqrt{t-s}}\,ds - \int_{t_0}^{t_1} \frac{w(s)}{\sqrt{\tau-s}}\,ds \right| &= \left| \int_{t_0}^{t_1}\frac{w(s)}{\sqrt{t-s}}-\frac{w(s)}{\sqrt{\tau-s}}\,ds \right| \leq R\left| \int_{t_0}^{t_1}\frac{1}{\sqrt{t-s}}-\frac{1}{\sqrt{\tau-s}}\,ds \right|\\ 
        &= 2R|-\sqrt{t -t_1} + \sqrt{t-t_0} +
        \sqrt{\tau-t_1}-\sqrt{\tau-t_0}| \\
        &\leq 2R|\sqrt{\tau-t_1}-\sqrt{t -t_1}| + 2R|\sqrt{t-t_0} -\sqrt{\tau-t_0}| \\
        &=2R\frac{|\tau -t|}{\sqrt{\tau-t_1}+\sqrt{t-t_1}} + 2R\frac{|t-\tau|}{\sqrt{t-t_0}+\sqrt{\tau-t_0}} \\
        &\leq 2R\frac{|\tau -t|}{\sqrt{|\tau-t|}} + 2R\frac{|t-\tau|}{\sqrt{|t-\tau|}} \leq 4R\sqrt{|\tau-t|} \\
        &<\varepsilon.
        \end{align*}
        We find a bound on $P$ as follows
        \begin{align*}
            |(P\Phi)(t)| &\leq \| y_0 + \int_{t_1}^t \eta(s)+A_u(\xi(s),s)\,ds \|_\infty \\
            &\qquad +\| w_0 + \int_{t_1}^t \left(\mu + \frac{\kappa\mu^{1/2}}{\sqrt{t-s}} + M_u(\xi(s),s)\right)\eta(s) + B_u(\xi(s),s)\,ds \|_\infty \\
            &\qquad +\|\kappa\mu^{1/2}\int_{t_0}^{t_1}\frac{w(s)}{\sqrt{t_1-s}} - \frac{w(s)}{\sqrt{t-s}}\,ds\|_\infty.
        \end{align*}
        Let us examine the integral in the final term as follows
        \begin{align*}
            \int_{t_0}^{t_1}\frac{w(s)}{\sqrt{t_1-s}} - \frac{w(s)}{\sqrt{t-s}}\,ds &= \int_{t_0}^{t_1}\frac{w(s)}{\sqrt{t-s}\sqrt{t_1-s}}(\sqrt{t-s}-\sqrt{t_1-s})\,ds \\
            &= \int_{t_0}^{t_1}\frac{w(s)(t-t_1)}{\sqrt{t-s}\sqrt{t_1-s}(\sqrt{t-s}+\sqrt{t_1-s})},
        \end{align*}
        and we can bound this using $|t-t_1|<\delta$, $\sqrt{t-s}\geq \sqrt{t-t_1}$ and $\sqrt{t-s}+\sqrt{t_1-s} \geq \sqrt{t-t_1} > \sqrt{\delta}$:
        \begin{align*}
            \left\|\int_{t_0}^{t_1}\frac{w(s)}{\sqrt{t_1-s}} - \frac{w(s)}{\sqrt{t-s}}\,ds\right\|_\infty &\leq \left\|\int_{t_0}^{t_1}\frac{w(s)\delta}{\sqrt{t-s}\sqrt{t_1-s}\sqrt{\delta}} \,ds \right\|_\infty
            \leq \left\| \int_{t_0}^{t_1}\frac{w(s)}{\sqrt{t_1-s}}\,ds \right\|_\infty \\
            &\leq R\left\| \int_{t_0}^{t_1}\frac{1}{\sqrt{t_1-s}}\,ds \right\|_\infty \leq 2R\sqrt{T-t_0}.
        \end{align*}
        Recall that, from assumption \eqref{assumptions}, we have for any $x_1,x_2\in\mathbb{R}^n$ and $\tau_1,\tau_2\in\mathbb{R}$
        \begin{align*}
            |A_u(x_1,\tau_1) - A_u(x_2,\tau_2)| &\leq |A_u(x_1,\tau_1) - A_u(x_2,\tau_1)| + |A_u(x_2,\tau_1) - A_u(x_2,\tau_2)| \\
            &\leq \|\nabla A_u\|_\infty|x_1-x_2| + \|\partial_t A_u\|_\infty|\tau_1-\tau_2| \\
            &\leq L_b\left( |x_1-x_2|+|\tau_1-\tau_2| \right)\,,\\
            |B_u(x_1,\tau_1) - B_u(x_2,\tau_2)| &\leq L_b\left( |x_1-x_2|+|\tau_1-\tau_2| \right) \,,
        \end{align*}
        by the mean value theorem.
        By integrating from $t_1$ to $t$,
        \begin{align*}
            \left\| \int_{t_1}^t A_u(\xi(s),s) \right\|_\infty &\leq \int_{t_1}^t L_b\left( |\xi(s)|+|s-t_0| \right) + |A_u(0,t_0)| \,ds \\
            &\leq L_b\|\xi\|_\infty (t-t_1) + L_b\delta(t-t_1)  + |A_u(0,t_0)|(t-t_1) \\
            \left\| \int_{t_1}^t B_u(\xi(s),s) \right\|_\infty &\leq L_b\|\xi\|_\infty (t-t_1) + L_b\delta(t-t_1)  + |B_u(0,t_0)|(t-t_1)
        \end{align*}
        Hence we may improve our bound
        \begin{align*}
            |(P\Phi)(t)| &\leq |y_{t_1}| + |w_{t_1}| + \|\Phi\|_\infty\left((t-t_1)+\mu(t-t_1)+ 2\kappa\mu^{1/2}\sqrt{t-t_1}+ L_b(t-t_1) + 2L_b(t-t_1) \right) \\
            &\qquad  + 2R\sqrt{T-t_0} + (t-t_1)\big[2L_b\delta + A_u(0,t_0) + B_u(0,t_0)\big]\,.
        \end{align*}
        Setting  $K = 4\max\{R,2R\sqrt{T-t_0}\}$ and $\delta$ such that
        \begin{equation*}\label{DeltaConditions1}
            \delta+\mu\delta + 2\kappa\mu^{1/2}\sqrt{\delta} + 3L_b\delta  < 1/4,\quad (2L_b\delta + A_u(0,t_0) + B_u(0,t_0))\delta < K/4,
        \end{equation*}
        we have that our lemma holds.
    \end{proof}
    
    \begin{lemma}\label{NewFarazmandLemma2}
        For $P$ as defined by equation \eqref{Pmap}, there exists $\delta$ such that for any $\Phi_1,\Phi_2 \in X_{\delta,K}$ we have
        \begin{equation}
            \|P\Phi_1-P\Phi_2\|_\infty \leq \frac{1}{2}\|\Phi_1-\Phi_2\|_\infty.
        \end{equation}
    \end{lemma}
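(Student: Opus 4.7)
The plan is to show directly that the difference $P\Phi_1 - P\Phi_2$ is bounded above in sup-norm by a small multiple of $\|\Phi_1-\Phi_2\|_\infty$. Write $\Phi_i = (\xi_i,\eta_i)$ for $i=1,2$. A crucial preliminary observation is that the last term in $(P\Phi)_2$, namely the integral over $[t_0,t_1]$ of the history $w$, does not depend on $\Phi$ at all, so it cancels in the difference $(P\Phi_1)_2 - (P\Phi_2)_2$. This reduces the problem to controlling only the $\int_{t_1}^t \cdots\, ds$ terms, which all vanish at $t=t_1$ and can be made arbitrarily small by shrinking $\delta$.

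For the first component I would simply use the Lipschitz bound on $A_u$ from assumption \eqref{assumptions} together with the inequality $|\eta_1-\eta_2|\le\|\Phi_1-\Phi_2\|_\infty$, to obtain
\[
\|(P\Phi_1)_1-(P\Phi_2)_1\|_\infty \;\le\; (1+L_c)\,\delta\,\|\Phi_1-\Phi_2\|_\infty .
\]

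The second component is the main computational step. I would split the product difference via
\[
M_u(\xi_1,s)\eta_1 - M_u(\xi_2,s)\eta_2 \;=\; M_u(\xi_1,s)(\eta_1-\eta_2) + \bigl(M_u(\xi_1,s)-M_u(\xi_2,s)\bigr)\eta_2 ,
\]
use $\|M_u\|_\infty\le L_b$ together with the Lipschitz bound for $M_u$, and use $\|\eta_2\|_\infty \le K$ to control the second piece by $L_c K\,|\xi_1-\xi_2|$. For the $B_u$ terms the Lipschitz bound is direct. The singular Basset-type contribution is handled by the elementary estimate
\[
\int_{t_1}^t \frac{|\eta_1(s)-\eta_2(s)|}{\sqrt{t-s}}\,ds \;\le\; 2\sqrt{\delta}\,\|\Phi_1-\Phi_2\|_\infty .
\]
Collecting everything yields
\[
\|(P\Phi_1)_2-(P\Phi_2)_2\|_\infty \;\le\; \Bigl(\mu\delta+2\kappa\mu^{1/2}\sqrt{\delta}+L_b\delta+(K+1)L_c\delta\Bigr)\|\Phi_1-\Phi_2\|_\infty .
\]

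Finally, combining the bounds on the two components gives
\[
\|P\Phi_1-P\Phi_2\|_\infty \;\le\; \bigl[\delta+\mu\delta+2\kappa\mu^{1/2}\sqrt{\delta}+L_b\delta\bigr]\|\Phi_1-\Phi_2\|_\infty + (2+K)L_c\delta\,\|\Phi_1-\Phi_2\|_\infty .
\]
Invoking the very same restrictions on $\delta$ that appear in the statement of Theorem \ref{CompactSetTheorem}, namely $\delta+\mu\delta+2\kappa\mu^{1/2}\sqrt{\delta}+L_b\delta<1/5$ and $(2+K)L_c\delta<1/4$, yields a contraction constant strictly below $1/2$, which is the claim. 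No single step is hard: the only subtleties are (i) noticing that the pre-$t_1$ history cancels so that the analysis is essentially that of the Farazmand--Haller local existence argument shifted to start at $t_1$, and (ii) keeping track of the factor $K$ that enters through the product $M_u\eta$, since this is what forces the auxiliary condition $(2+K)L_c\delta<1/4$ to appear in addition to the scalar smallness condition.
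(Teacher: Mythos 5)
Your proposal is correct and follows essentially the same route as the paper: the paper's proof simply observes that the $\int_{t_0}^{t_1}$ history term is independent of $\Phi$ and hence cancels, and then defers the remaining contraction estimates to Lemma 2 of Farazmand--Haller, arriving at exactly the conditions $\delta+\mu\delta+2\kappa\mu^{1/2}\sqrt{\delta}+L_b\delta<1/4$ and $(2+K)L_c\delta<1/4$ that your explicit computation reproduces. You have merely written out in full the estimates the paper cites, including the $M_u\eta$ product split that produces the factor $K$.
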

    \begin{proof}
        The proof of this is as in Lemma 2 in \cite{Farazmand2015}, since in $P\Phi_1$ and $P\Phi_2$ the integral from $t_0$ to $t_1$ is the same and thus cancels. Thus the proof exactly follows that of the standard Maxey-Riley system without additional memory, with no modifications necessary since the boundedness of $A_u$ and $B_u$ is not used. 
        Thus this lemma holds for $\delta$ sufficiently small to ensure that
        \begin{equation}\label{DeltaConditions2}
            \delta + \mu\delta + 2\kappa\mu^{1/2}\sqrt{\delta} + L_b\delta < 1/4,\quad (2+K)L_c\delta < 1/4\,.
        \end{equation}
    \end{proof}

    \section{Proving the results from sections \ref{GLOBALWELLPOSEDNESSSECTION} and \ref{WEAKTOSTRONGSOLUTIONS}}\label{TechnicalProofsAppendix}
    
\begin{repeat_proposition}[\ref{UniquenessProposition}]
    Suppose $(y_1,w_1)$ and $(y_2,w_2)$ are two solutions to \eqref{MR_integrated_ODEs} with domains $[t_0,T_1)$ and $[t_0,T_2)$ respectively, corresponding to the same initial condition $(y_0,w_0)$, then the two solutions coincide on $[t_0,\min\{T_1,T_2\})$.
\end{repeat_proposition}

\begin{proof}
For $t$ in $[t_0,\min\{T_1,T_2\})$ we have
\begin{equation}
    \begin{aligned}
        y_i(t) &= y_0 + \int_{t_0}^t w_i(s) + A_u(y_i(s),s)\,ds, \quad i = 1,2 \\
        w_i(t) &= w_0 + \int_{t_0}^t \left( -\mu w_i(s) - M_u(y_i(s),s)w_i(s)-\kappa\mu^{1/2}\frac{w_i(s)}{\sqrt{t-s}} + B_u(y_i(s),s)\right)\,ds, \quad i = 1,2\,.
    \end{aligned}    
\end{equation}
We now consider the Euclidean norm of the differences $\|y_1-y_2\|$ and $\|w_1-w_2\|$, and find a bound on these as follows:
\begin{equation}
\begin{aligned}
    \|y_1(t) - y_2(t) \| &= \left\| \int_{t_0}^t w_1(s) - w_2(s) + A_u(y_1(s),s) - A_u(y_2(s),s) \,ds \right\|, \\
    &\leq \int_{t_0}^t \|w_1(s)-w_2(s)\| + L_c\|y_1(s)-y_2(s)\|\,ds
\end{aligned}
\end{equation}
and
\begin{equation}
\begin{aligned}
    \|w_1(t)-w_2(t)\| &= \bigg\| \int_{t_0}^t -\mu(w_1(s)-w_2(s)) - M_u(y_1(s),s)w_1(s) + M_u(y_2(s),s)w_2(s) \\
    &\qquad  -\kappa\mu^{1/2}\left(\frac{w_1(s)}{\sqrt{t-s}}-\frac{w_2(s)}{\sqrt{t-s}}\right) +B_u(y_1(s),s)-B_u(y_2(s),s)\,ds \bigg\| \\
    &\leq \int_{t_0}^t \mu\|w_1(s)-w_2(s)\| + L_b\|w_1(s)-w_2(s)\|  \\
    &\qquad  -\frac{\kappa\mu^{1/2}}{\sqrt{t-s}}\|w_1(s)-w_2(s)\| + L_c\|y_1(s)-y_2(s)\| \,ds.
\end{aligned}
\end{equation}
We have uniqueness from an application of the Gr\"onwall result from Appendix \ref{GronwallAppendix} with `$u(t)$' equal to $\|y_1(t)-y_2(t)\|+\|w_1(t)+w_2(t)\|$, noting that $a(t) = 0$ in the case of the above bounds.
\end{proof}

\begin{repeat_lemma}[\ref{UnionSolutionsLemma}]
    Let $\{(y_\alpha(t),w_\alpha(t))\}_{\alpha\in A}$ be a family of solutions to \eqref{MR_integrated_ODEs} with initial condition $(y_0,w_0)$, where $A$ is an arbitrary index set. Let the domain of $(y_\alpha,w_\alpha)$ be $[t_0,T_\alpha)$. We define $T$ such that $[t_0,T) = \bigcup_{\alpha\in A}[t_0,T_\alpha)$, and then define a function on $[t_0,T)$ by
    \begin{equation}\tag{\ref{LemmaSolution} revisited}
        (y(t),w(t)) =(y_\alpha(t),w_\alpha(t)),\quad \text{if }t\in [t_0,T_\alpha).
    \end{equation}
    Then $(y(t),w(t))$ is also a solution to \eqref{MR_integrated_ODEs} with the same initial condition.
\end{repeat_lemma}

\begin{proof}
    We must justify first that \eqref{LemmaSolution} gives a consistent definition of $(y,w)$, i.e. that $(y(t),w(t))$ does not depend on the choice of $\alpha$. For $t\in [t_0,T_{\alpha_1})$, \eqref{LemmaSolution} gives that $(y(t),w(t)) =(y_{\alpha_1}(t),w_{\alpha_1}(t))$. If $t$ also belongs to $[t_0,T_{\alpha_2})$, then $t\in [t_0,\min\{T_{\alpha_1},T_{\alpha_2}\})$ and therefore our uniqueness result Proposition \ref{UniquenessProposition} implies that $(y_{\alpha_1}(t),w_{\alpha_1}(t))=(y_{\alpha_2}(t),w_{\alpha_2}(t))$ for this value of $t$.
    
    Now we prove that $(y,w)$ defined by \eqref{LemmaSolution} defines a solution to \eqref{MR_integrated_ODEs} on $[t_0,T)$. We know that $t_0\in [t_0,T_\alpha)$ for any $\alpha$ and therefore
    \begin{equation}
        (y(t_0),w(t_0)) = (y_\alpha(t_0),w_\alpha(t_0)) = (y_0,w_0),
    \end{equation}
    since $(y_\alpha,w_\alpha)$ is a solution to \eqref{MR_integrated_ODEs} with initial condition $(y_0,w_0)$. Furthermore, for any $t\in[t_0,T)$ there exists $\alpha$ such that $t\in [t_0,T_\alpha)$. We know that $(y_\alpha,w_\alpha)$ solves \eqref{MR_integrated_ODEs} on $[t_0,T_\alpha)$, $(y,w)=(y_\alpha,w_\alpha)$ on $[t_0,T_\alpha)$, and therefore $(y,w)$ solves \eqref{MR_integrated_ODEs} at any $t\in [t_0,T)$.
\end{proof}

\begin{repeat_lemma}[\ref{GronwallLemma}]
    If $u$ satisfies the conditions \eqref{assumptions} and $(y,w)$ satisfies \eqref{MR_integrated_ODEs}, on the interval $[t_0,T)$, there exists some $(C_Y,C_W)$ depending on $T,y_0,w_0,\kappa,\mu$ and $L_b$ such that \begin{align}
        \sup_{t\in [t_0,T)}|y(t)|&\leq C_Y,\\
        \sup_{t\in [t_0,T)}|w(t)|&\leq C_W.
    \end{align} 
\end{repeat_lemma}
\begin{proof}
    We seek to apply a bound on the solution using the integrated form of the equation. We first notice that, as in Appendix \ref{AppendixFarazmandLemmas}, we may bound the integral of $B_u(y(s),s)$ using the Lipschitz property. That is, for any $x_1,x_2\in\mathbb{R}^{n}$ and $\tau_1,\tau_2\in\mathbb{R}$, we have
    \begin{align*}
        |B_u(x_1,\tau_1)-B_u(x_2,\tau_2)| &\leq |B_u(x_1,\tau_1) - B_u(x_2,\tau_1)| + |B_u(x_2,\tau_1) - B_u(x_2,\tau_2) |
        \\
        &\leq \|\nabla B_u \|_{\infty} |x_1-x_2| + \|\partial_tB_u\|_{\infty} |\tau_1-\tau_2|
        \\
        &\leq L_b(|x_1-x_2|+|\tau_1-\tau_2|)
        \,.
    \end{align*}
    Therefore, choosing $x_1 = y(s)$, $\tau_1 = s$, and $x_2=\tau_2 = 0$, we have that
    \begin{align*}
        \bigg|\int_{t_0}^t B_u(y(s),s)\,ds\bigg| &\leq \int_{t_0}^t L_b(|y(s)|+|s|) + |B_u(0,0)|\,ds
        \\
        &\leq L_b\int_{t_0}^t|y(s)|\,ds + L_b\int_{t_0}^t|s|\,ds + |B_u(0,0)|(t-t_0)
        \\
        &\leq L_b\int_{t_0}^t|y(s)|\,ds + \frac{L_b(t^2-t_0^2)}{2} + |B_u(0,0)|(t-t_0)
        \,.
    \end{align*}
    Beginning with the equation for $w$, we seek a bound on the solution as follows
	\begin{align*}
		|w(t)| &\leq |w_0| + \int_{t_0}^t\Big|-\mu w(s) - M_u(y(s),s)w(s) - \kappa \mu^{1/2}\frac{w(s)}{\sqrt{t-s}} + B_u(y(s),s) \Big|\,ds \\
		&\leq |w_0| + \int_{t_0}^t |\mu+L_b||w(s)| + \Big|\kappa\mu^{1/2}\frac{w(s)}{\sqrt{t-s}} \Big| + |B_u(y(s),s)|\,ds \\
		&\leq |w_0| + |\mu+L_b|\int_{t_0}^t|w(s)|\,ds + |\kappa\mu^{1/2}|\int_{t_0}^t(t-s)^{-1/2}|w(s)|\,ds \\
		&\qquad\qquad + L_b\int_{t_0}^t|y(s)|\,ds + \frac{L_b(t^2-t_0^2)}{2} + |B_u(0,0)|(t-t_0)
		\,.
	\end{align*}
	We now proceed with the equation for $y$
	\begin{align*}
	    |y(t)| &\leq |y_0| + \int_{t_0}^t |w(s)| + |A_u(y(s),s)|\,ds
	    \\
	    &\leq \int_{t_0}^t |w(s)|\,ds + L_b\int_{t_0}^t|y(s)|\,ds + \frac{L_b(t^2-t_0^2)}{2} + |A_u(0,0)|(t-t_0)
	    \,.
	\end{align*}
	We then must consider the above inequalities for $|y(s)|$ and $|w(s)|$ as a pair. In particular, we define $a(s)$ by
	\begin{equation*}
	    a(s) = |y(s)|+|w(s)|\,,
	\end{equation*}
	and then we have the following inequality
	\begin{equation}
	\begin{aligned}
	    a(s) &\leq a(t_0) + L_b(t^2-t_0^2) + (|A_u(0,0)| + |B_u(0,0)|)(t-t_0)
	    \\
	    &\qquad + \max\{1,\mu+L_b\}\int_{t_0}^t a(s)\,ds + |\kappa\mu^{1/2}|\int_{t_0}^t(t-s)^{-1/2}a(s)\,ds\,.
	\end{aligned}
	\end{equation}
    A Gr\"onwall-style result of S. Y. Lin \cite{Lin2013a}, gives a bound on the solution as required (see Appendix \ref{GronwallAppendix}).

\end{proof}

\begin{repeat_lemma}[\ref{HolderLimitLemma}]
    Assuming $(y,w)$ is a solution of \eqref{MR_integrated_ODEs}, we have that
    \begin{itemize}
    \item[(i)] the integral \eqref{MRFractionalIntegral} is 1/2-H\"older, i.e. there exists some constant $C>0$ such that for any $t_1,t_2$ with $t_0<t_1<t_2$ we have
\begin{equation}\tag{\ref{HolderStatementIntegral} revisited}
	\left| \int_{t_0}^{t_2}\frac{w(s)}{\sqrt{t_2-s}}\,ds -\int_{t_0}^{t_1}\frac{w(s)}{\sqrt{t_1-s}}\,ds \right| \leq C|t_2-t_1|^{1/2}\,,
\end{equation}

and
    \item[(ii)] if $w(t_0) = 0$ then the following limit exists and is equal to zero
    \begin{equation}
        \lim_{t\to t_0}\frac{w(t)}{\sqrt{t-t_0}} = 0\,.
    \end{equation}
    \end{itemize}
\end{repeat_lemma}
\begin{proof}
We prove the two parts separately, beginning with part (i).

\emph{Part (i):} 
We bound the left hand side of \eqref{HolderStatementIntegral} as follows
\begin{equation*}
	\left| \int_{t_0}^{t_2}\frac{w(s)}{\sqrt{t_2-s}}\,ds -\int_{t_0}^{t_1}\frac{w(s)}{\sqrt{t_1-s}}\,ds \right| \leq \left| \int_{t_1}^{t_2}\frac{w(s)}{\sqrt{t_2-s}}\,ds\right| + \left|\int_{t_0}^{t_1}\frac{w(s)}{\sqrt{t_2-s}}-\frac{w(s)}{\sqrt{t_1-s}}\,ds \right|,
\end{equation*}
and it remains to prove that both terms on the right hand side are indeed 1/2-H\"older continuous. By the same argument as \eqref{IntegralContinuityt_0}, the first term is 1/2-H\"older and so is the second term since
\begin{align*}
	\left|\int_{t_0}^{t_1}\frac{w(s)}{\sqrt{t_2-s}}-\frac{w(s)}{\sqrt{t_1-s}}\,ds \right| &\leq K\left| \int_{t_0}^{t_1}\frac{1}{\sqrt{t_2-s}}-\frac{1}{\sqrt{t_1-s}}\,ds \right| \\
	&\leq \left| -2K\sqrt{t_2-t_1} + 2K\sqrt{t_2-t_0} - 2K\sqrt{t_1-t_0} \right| \\
	&\leq 2K\sqrt{t_2-t_1} + 2K\left| \sqrt{t_2-t_0} - \sqrt{t_1-t_0} \right| \\
	&\leq 2K\sqrt{t_2-t_1} + 2K\frac{|t_2-t_1|}{\sqrt{t_2-t_0} + \sqrt{t_1-t_0}} \\
	&\leq 2K\sqrt{t_2-t_1} + 2K\frac{|t_2-t_1|}{\sqrt{t_2-t_1} + \sqrt{t_1-t_1}} \\
	&\leq 4K\sqrt{t_2-t_1},
\end{align*}
where in the first line we have made use of the boundedness property of solutions to the Maxey-Riley equation, see Remark \ref{FarazmandTheorem}.
    
\emph{Part (ii):}
    Recall that $w$ satisfies
   \begin{equation}
        w(t) = w(t_0) + \int_{t_0}^t \left( -\mu w(s) - M_u(y(s),s)w(s)-\kappa\mu^{1/2}\frac{w(s)}{\sqrt{t-s}} + B_u(y(s),s)\right)\,ds\,.
   \end{equation}
   Dividing through by $\sqrt{t-t_0}$ and considering $w(t_0) = 0$, we have
   \begin{equation}
        \frac{w(t)}{\sqrt{t-t_0}} = \frac{1}{\sqrt{t-t_0}}\int_{t_0}^t  \left( -\mu w(s) - M_u(y(s),s)w(s) + B_u(y(s),s)\right)\,ds - \frac{\kappa\mu^{1/2}}{\sqrt{t-t_0}}\int_{t_0}^t\frac{w(s)}{\sqrt{t-s}}\,ds\,.
   \end{equation}
   We know that $w$ is locally bounded and, under assumptions \eqref{assumptions}, $M_u$ is uniformly bounded and $B_u$ is sufficiently smooth to ensure that it is locally bounded on each interval $[t_0,t]$. Hence, there exists some $c_1$ which (locally) bounds the integrand of the first integral on the right hand side and hence
   \begin{equation}
       \frac{1}{\sqrt{t-t_0}}\int_{t_0}^t  \left( -\mu w(s) - M_u(y(s),s)w(s) + B_u(y(s),s)\right)\,ds \leq c_1\sqrt{t-t_0}\,.
   \end{equation}
   We may therefore deduce that
   \begin{equation}
       \lim_{t\to t_0}\frac{1}{\sqrt{t-t_0}}\int_{t_0}^t  \left( -\mu w(s) - M_u(y(s),s)w(s) + B_u(y(s),s)\right)\,ds = 0\,.
   \end{equation}
   It remains to show the existence of the limit 
  \begin{equation}\label{LemmaSubLimit}
    \begin{aligned}
      \lim_{t\to t_0}& \frac{1}{\sqrt{t-t_0}}\int_{t_0}^t \frac{w(s)}{\sqrt{t-s}}\,ds = \\
      &\lim_{t\to t_0}\bigg[\frac{1}{\sqrt{t-t_0}}\int_{t_0}^t\frac{1}{\sqrt{t-s}}\int_{t_0}^s\left(-\mu w(r) - M_u(y(r),r)w(r) + B_u(y(r),r)\right)\,dr\,ds\bigg] \\
      &-\lim_{t\to t_0}\bigg[\frac{1}{\sqrt{t-t_0}}\int_{t_0}^t\frac{\kappa\mu^{1/2}}{\sqrt{t-s}}\int_{t_0}^s\frac{w(r)}{\sqrt{s-r}}\,dr\,ds
      \bigg]\,.
    \end{aligned}
  \end{equation}
  where to show the equality we have used the equation \eqref{MR_integrated_ODEs} for $w(s)$. We may bound the middle line of \eqref{LemmaSubLimit} by
  \begin{equation*}
      \frac{1}{\sqrt{t-t_0}}\int_{t_0}^t\frac{1}{\sqrt{t-s}}\int_{t_0}^s\left(-\mu w(r) - M_u(y(r),r)w(r) + B_u(y(r),r)\right)\,dr\,ds \leq \frac{c_1}{\sqrt{t-t_0}}\int_{t_0}^t\frac{s-t_0}{\sqrt{t-s}}\,ds\,.
  \end{equation*}
  The integral on the right hand side may be calculated by making a suitable substitution
  \begin{align*}
      \int_{t_0}^t \frac{s-t_0}{\sqrt{t-s}}\,ds &= -\int_{\sqrt{t-t_0}}^0 \frac{t-t_0 - u^2}{u}2u\,du = \int_0^{\sqrt{t-t_0}} (t-t_0) - u^2\,du \\
      &= 2\left( (t-t_0)^{3/2} - \frac{1}{3} (t-t_0)^{3/2} \right) = \frac43 (t-t_0)^{3/2}\,.
  \end{align*}
  Therefore, the limit in the second line of \eqref{LemmaSubLimit} is equal to 0. We now observe the limit in the final line of \eqref{LemmaSubLimit}, i.e.
  \begin{equation}
      \lim_{t\to t_0}\bigg[ \frac{1}{\sqrt{t-t_0}}\int_{t_0}^t\frac{1}{\sqrt{t-s}}\int_{t_0}^s\frac{w(r)}{\sqrt{s-r}}\,dr\,ds\bigg]\,.
  \end{equation}
  First recall that $w$ is bounded by $K$, then we may bound and again calculate by making a series of suitable substitutions
  \begin{equation}
      \begin{aligned}
          \frac{1}{\sqrt{t-t_0}}\int_{t_0}^t\frac{1}{\sqrt{t-s}}\int_{t_0}^s\frac{w(r)}{\sqrt{s-r}}\,dr\,ds &\leq \frac{K}{\sqrt{t-t_0}}\int_{t_0}^t\frac{1}{\sqrt{t-s}}\int_{t_0}^s\frac{1}{\sqrt{s-r}}\,dr\,ds \\
          &\leq \frac{2K}{\sqrt{t-t_0}}\int_{t_0}^t\frac{\sqrt{s-t_0}}{\sqrt{t-s}}\,ds \\
          &\leq \frac{2K}{\sqrt{t-t_0}}\int_0^{\sqrt{t-t_0}} \frac{2u^2}{\sqrt{t-u^2-t_0}}\,du \\
          &\leq 4K\sqrt{t-t_0}\int_0^{\pi/2}\sin^2\theta\,d\theta = K\pi \sqrt{t-t_0}\,,
      \end{aligned}
  \end{equation}
  and thus
  \begin{equation}
      \lim_{t\to t_0}\left[ \frac{1}{\sqrt{t-t_0}}\int_{t_0}^t\frac{1}{\sqrt{t-s}}\int_{t_0}^s\frac{w(r)}{\sqrt{s-r}}\,dr\,ds \right] = 0\,.
  \end{equation}
  
  Putting these calculations together, we have proven our claim.
\end{proof}

    \section{Fractional evolution of a matrix inverse}\label{MatrixInverseAppendix}
    
    If an $n\times n$ matrix, $M_t$, evolves according to
    \begin{equation*}
        M_t  = I_n + \int_{t_0}^t A_sM_s\,ds\,,
    \end{equation*}
    and another matrix, $N_t$, according to
    \begin{equation*}
        N_t = I_n - \int_{t_0}^t N_sA_s\,ds\,.
    \end{equation*}
    Then we have
    \begin{align*}
        N_tM_t &= I_n + \int_{t_0}^t N_s\frac{dM_s}{ds}\,ds + \int_{t_0}^t \frac{dN_s}{ds}M_s\,ds \\
        &= I_n + \int_{t_0}^t N_sA_sM_s\,ds - \int_{t_0}^t N_sA_sM_s\,ds = I_n\,.
    \end{align*}
    Thus, $\det(N_tM_t) = \det(N_t)\det(M_t) = 1$ and therefore $\det(M_t) \neq 0$ and $M_t$ is invertible. Moreover, $N_t$ is the inverse of $M_t$ for all $t$. 
    
    In the fractional case, this is less simple. Suppose instead that $M_t$ evolves according to
    \begin{equation}
        M_t = I_n + \frac{1}{\sqrt{\pi}} \int_{t_0}^t \frac{A_sM_s}{\sqrt{t-s}}\,ds\,,
    \end{equation}
    so, in differential form,
    \begin{equation}
        \frac{dM_t}{dt} = _{t_0}D^{1/2}(A_tM_t)\,.
    \end{equation}
    Recall that the \emph{right} Riemann-Liouville fractional derivative may be defined similarly to the standard, or \emph{left} Riemann-Liouville derivative in Definition \ref{RiemannLiouvilleDefinition}, by instead changing the equation \ref{RiemannLiouvilleDefinitionEquation} to
    \begin{equation}
		D^p_bf(t) = \frac{1}{\Gamma(n-p)}\left(-\frac{d}{dt}\right)^n\int_t^b(s-t)^{n-p-1}f(s)\,ds.
	\end{equation}
	We then note that, for continuous functions $f$ and $g$, we have the following result
	\begin{equation}\label{FractionalProductRule}
	    \int_a^b f(t) _aD^\alpha g(t) \,ds = \int_a^b g(s)D^\alpha_b f(s)\,ds\,.
	\end{equation}
	Suppose now that the matrix $N_s$ evolves by
	\begin{equation}
	    N_s = I_n + \frac{1}{\sqrt{\pi}} \int_s^t \frac{N_rA_r}{\sqrt{r-s}}\,dr\,,
	\end{equation}
	where this evolution depends on time. In differential form this is
	\begin{equation}
	    \frac{dN_s}{ds} = - D_t^{1/2} (N_sA_s)\,.
	\end{equation}
	Now suppose that $A_t = A$ is constant, then we have
	\begin{equation}
	\begin{aligned}
	    N_t M_t &= I_n + \int_{t_0}^t N_s\frac{dM_s}{ds}\,ds + \int_{t_0}^t \frac{dN_s}{ds}M_s\,ds\,, \\
	    &= I_n + \int_{t_0}^t N_s {_{t_0}D^{1/2}} (AM_s)\,ds - \int_{t_0}^t D^{1/2}_t(N_sA)M_s\,ds\,.
	   \end{aligned}
	\end{equation}
	Now by the equation \ref{FractionalProductRule}, we have
	\begin{equation}
	    N_tM_t = I_n\,,
	\end{equation}
	and thus we have determined the equation for the inverse when $M_t$ is a matrix evolving according to a fractional differential equation.
\end{appendices}

\bibliography{PhDProject.bib}
\bibliographystyle{abbrv}

\end{document}